\documentclass[12pt,a4paper,reqno]{amsart}
\usepackage[all]{xy}
\usepackage{amsmath}
\usepackage{amsfonts}
\usepackage{amssymb}
\usepackage{amscd}
\usepackage{color}

\setlength{\textwidth}{15.2cm}
\setlength{\oddsidemargin}{0pt}
\setlength{\evensidemargin}{0pt}

\theoremstyle{plain}
\newtheorem{theorem}{Theorem}[section]
\newtheorem{mytheorem}{Theorem}[subsection]
\newtheorem{corollary}[mytheorem]{Corollary}
\newtheorem{lemma}[mytheorem]{Lemma}
\newtheorem{proposition}[mytheorem]{Proposition}
\newtheorem{definition}[mytheorem]{Definition}
\newtheorem{example}[mytheorem]{Example}
\newtheorem{remark}[mytheorem]{Remark}

\theoremstyle{definition}

\def\CC{{\mathbb C}}

\def\FF{{\mathbb F}}

\def\QQ{{\mathbb Q}}
\def\RR{{\mathbb R}}
\def\ZZ{{\mathbb Z}}






\def\Ker{\mathrm{Ker}\,}%

\def\hfl#1#2{\smash{\mathop{\hbox to
10mm{\rightarrowfill}}\limits^{\scriptstyle#1}_{\scriptstyle#2}}}
\def\hflrev#1#2{\smash{\mathop{\hbox to
10mm{\leftarrowfill}}\limits^{\scriptstyle#1}_{\scriptstyle#2}}}
\def\hflcourte#1#2{\smash{\mathop{\hbox to
3mm{\rightarrowfill}}\limits^{\scriptstyle#1}_{\scriptstyle#2}}}
\def\hflrevcourte#1#2{\smash{\mathop{\hbox to
3mm{\leftarrowfill}}\limits^{\scriptstyle#1}_{\scriptstyle#2}}}

\newcommand{\ilim}{\varprojlim_n}

\newcommand{\cal}{\mathcal}

\newcommand{\limp}{\displaystyle{\lim_{\longleftarrow}}}

\DeclareMathOperator{\coker}{coker}

\DeclareMathOperator{\de}{\mathrm{deg}}

\DeclareMathOperator{\Frob}{\mathrm{Frob}}

\newcommand{\QQbar}{\overline{\QQ}}

\def\+{{\dagger}}

\begin{document}
\title[ ] {Tamagawa Number formula with coefficients over varieties in positive characteristic}

\author[Trihan-Brinon]{Fabien Trihan and Olivier Brinon}
\address{Sophia University, Japan}
\email{f-trihan-52m@sophia.ac.jp}
\address{Universite Bordeaux 1}
\email{olivier.brinon@math.u-bordeaux1.fr}

\begin{abstract} We express the order of the pole and the leading coefficient of the $L$-function of a (large class of) $\ell$-adic coefficients ($\ell$ any prime) over a quasi-projective variety over a finite field of characteristic $p$. This is a generalization of the result of \cite{MR} with coefficients. The new key ingredient is the use of $F$-gauges and their equivalence in the derived category with Raynaud modules proved by Ekedahl. 
\end{abstract}

\maketitle \tableofcontents

\begin{section}{Introduction} Let $V$ be a variety of dimension $n$ over a finite field $k=\FF_q$, $q=p^a$, with $p$ a prime and $a\in \ZZ_{>0}$. We assume that $V$ is equipped with an embedding $j:V\hookrightarrow V'$ into a proper variety $V'/\FF_q$. Let $E$ be an overconvergent $F$-isocrystal over $V$ pure, satisfying some integrality condition and with finite cohomological dimension (see Section \ref{coeff} for the precise conditions). We give an expression of the order of the pole and of the leading coefficient of the $L$-function of this coefficients at any integer under some classical semisimplicity conjecture. This is a generalization of the work of Milne-Ramachandran who treat the case of the trivial coefficient and is usually called the Tamagawa Number formula in the number field case. We also give the analogous formula for the $l$ adic companion of such coefficient in the sense of Deligne (see \cite{AE}). The proof is based on the celebrated work of Milne-Ramachandran with as new ingredient, the use of $F$-gauges and their equivalence in the derived category with the Raynaud-modules as proved by Ekedhal (\cite{Ek}). The plan of the paper is as follows: Section 3 is devoted to the setting. In section 4, we show how to associate to a $F$ log crystal over a proper log smooth base a structure of coherent complex of $R$ modules in the derived category. Section 5 is a recall of the  paper of Milne-Ramachandran (\cite{MR}) whose main result is to express the order of the pole and of the leading coefficient of the Zeta function of a coherent complex of $R$-modules. Finally, in the final section we apply the main result of Milne -Ramachandran to the structure of coherent $R$-complex constructed in section 4 via the use of proper hypercovering and proper descent of the rigid cohomology of our nice overconvergent $F$-isocrystal (see Cor. 6.1.3 for our main result). We also give the analogous formula for the $l$-adic companion of our coefficient, when it exists (Th. 6.2). 
\end{section}

\begin{section}{Acknowledgement} We would like to express our gratitude to Kazuya Kato and Atsushi Shiho for very enlightning discussions about the appropriate category of coefficients for our result.
\end{section} 

\begin{section}{Setting} 

\subsection{The variety}\label{variety} Let $V$ be a variety of dimension $n$ over a finite field $k=\FF_q$, $q=p^a$, with $p$ a prime and $a\in \ZZ_{>0}$. We assume that $V$ is equipped with an embedding $j:V\hookrightarrow V'$ into a proper variety $V'/\FF_q$. We denote $i:Z:=V'\setminus V\hookrightarrow V'$.  By \cite{Nakk12}, there is a simplicial proper hypercovering $(j_.:V_.\hookrightarrow V'_.)$ of $(V,V')$ such that $V'_./\FF_q$ is a proper smooth scheme and $V_./\FF_q$ is the complement of a simplicial strict divisor with normal crossings $Z_.$ on $V'_.$, such that $Z_.\to Z$ is a proper hypercovering of $Z$. We denote $V'^\sharp$ the log scheme whose log-structure is the one induced by $Z_.$. $V'^\sharp$ is a proper log-smooth variety over $\FF_q$ endowed with the trivial log-structure.

\subsection{The coefficient}\label{coeff} Let $W$ denote the Witt vectors of $\FF_q$ and $K$ its fraction field. We denote $\bar k$ an algebraic closure of $k$, $\bar{W}=W(\bar{k})$ and $\bar K$ its fraction field. For any log-scheme $X^\sharp/\FF_q$, We denote $F-Cryst(X^\sharp/W)$ the category of finite locally free non-degenerated $F$-crystals over $X^\sharp/W$ for the \'etale topology (see for example \cite{HK}) and $F-iso(X^\sharp/K)$ the category of $F$-isocrystal over $X^\sharp/K$ (see for example \cite{Sh}). 
We denote $F_E$ the $\sigma$-linear endomorphism of $E$ $F_E:=F\circ\tau$, where $\tau:E\to \sigma_{X^\sharp}^*E$ is the map sending $x$ to $x\otimes 1$. For a general variety $X/\FF_q$, we denote $F-iso^\dagger(X/K)$ the category of overconvergent $F$-isocrystal over $X$ (see for example \cite{LS}). Let $E\in F-iso^\dagger(V/K)$. Then by the semistable theorem of Kedlaya (\cite{Ke}), we can choose a finite generically \'etale hypercovering $f_{V_.}:V_.\to V$ as in the previous paragraph and a family $(E^{log}_.)$ of objects of $F-iso(V^{'\sharp}_./K)$ such that 
$$j_.^\dagger E^{log}_.=f_{V_.}^*E$$
where the functor
$$j_.^\dagger: F-iso(V^{'\sharp}_./K)\to F-iso^\dagger(V/K)$$
is the one constructed in \cite{LST}.

We will need the following assumptions:

We will say that the data $(E, V.,E^{log}_.)$ is {\bf (NICE)} if it satisfies the following conditions:
\begin{enumerate}
\item The family of log $F$-isocrystals $(E^{log}_.)$ is integral in the sense that they come from objects of $F-Cryst(V^{'\sharp}_./W)$ (Recall that we have a canonical functor from the category of crystals to isocrystals).
\item $H^i(V_.,E)=0$ for $i>N(E)$, for some constant $N(E)$ independant of $i$.
\end{enumerate}

\begin{remark} The second hypothesis is satisfied for example in the following cases:
\begin{enumerate}
\item The variety $V$ is smooth, in which case $N(E)=2n$.
\item The overconvergent $F$-isocrystal $E$ has constant Frobenius slope. In this situation, it is possible to define on the de Rham-Witt cohomology of $E^{log}$ a slope filtration and the argument of \cite{Nakk12}, 11..7.5 will work with coefficient.
\end{enumerate}
\end{remark}






\subsection{L-function}\label{Lfct} The $L$-function of $E$ is defined as

$$L(V,E,t):=\prod_{v\in V} det(1-tF_v, E_v)^{-1}$$

where $E_v$ is the fiber of $E$ at the closed point $v$ of $V$, an $F$-isocrystal on $k(v)/K$. It was proved in \cite{ELS} that this infinite Euler product is a rational function in $K(t)$ that can be expressed as follows:

$$L(V,E,t)=\prod_i det(1-t\Phi^i_E, H^i_{rig,c}(V/K,E))^{(-1)^{i+1}},$$
where $\Phi^i_E:=H^i(F_E^a)$.

We will need the following second assumption on the coefficient. We denote $\iota$ the choice of an embedding  $\QQ_p\in \CC$.

\noindent ($\iota$-{\bf PURE}) For any $v$, the eigenvalues $\alpha$ of the Frobenius acting on $E_v$ are Weil numbers , i.e. such that $|\iota(\alpha)|=q^{w/2}$ for some integer $w$ independent of $v$.  

Under this hypothesis, we recall that it was proved in \cite{Ked-Weil2} that the $F$-isocrystals over $\FF_q$ $H^i_{rig,c}(V/K,E)$ are $\iota$-mixed, in particular the eigenvalues of $F^a$ are Weil numbers. 

\end{section} 

\begin{section}{$F$-gauge complex of a $F$-log crystal}

In the next two subsections, we recall the construction of \cite{Ek}, p.36-37.

\subsection{From $F$-log crystals to virtual crystals}\label{F-log}  Let $X/\FF_q$ be a proper smooth variety, $Z$ a divisor with normal crossing with respect to $X$. We denote $X^\#/\FF_q$ the associated proper smooth log-variety. Let $E$ be a $F$-crystal on $X^\#/\FF_q$.

\begin{definition} A virtual $F$-crystal (resp. of finite type) is a triple $(U,\underbar F,N)$ where $U$ is a finite dimensional $K$-vector space, $\underbar F$ a $\sigma$-linear automorphism of $U$ and $N$ a (resp. finitely generated) $W$-submodule of $U$ such that $N\otimes K=U$.
\end{definition}

\begin{example} We consider on $U(E,i):=H^i_{crys}(X^\#/W,E)\otimes\QQ_p$ the $\sigma$-linear map:
$${\underbar F}^i_E:=H^i(F_E)=H^i(F)\circ H^i(\tau): U(E,i)\to U(\sigma^*E,i)\to U(E,i).$$
By definition, $H^i(F)$ is clearly a $\sigma$-linear bijective map. By \cite{ELS}, Prop. 2.1, the map $H^i(\tau^a)\otimes\QQ_p:U(E,i)\to U(\sigma^{a,*}E,i)$ is a bijection. This map is the composition of $\sigma$-linear maps induced by the adjonction map of the absolute Frobenius on $X^\#$:
$$U(E,i)\to U(\sigma^*E,i)\to\dots\to U(\sigma^{a,*}E,i).$$ We claim that the first one, $H^i(\tau)$ is also bijective. Indeed, since the composed is injective, this map (as well as all the following maps) is also injective. The surjectivity of the composed map  and the injectivity of the map $U(\sigma^*E,i)\to U(\sigma^{a,*}E,i)$ implies then the surjectivity of $H^i(\tau)$. We denote ${\bf VC}(E,i)$ the virtual crystal $(H^i_{crys}(X^\#/W,E)\otimes\QQ_p,{\underbar F}^i_E, N(E,i):= H^i_{crys}(X^\#/W,E))$.
\end{example}

\subsection{From virtual crystal to $F$-gauges} 

\begin{definition} A $F$-gauge $(M, \tilde F,\tilde V,\tau)$ is the data of a graded $W$-module $M=\otimes_{i\in\ZZ} M^i$ endowed with linear mappings of respective degree 1 and -1 $\tilde F$ and $\tilde V$ and a $\sigma$-linear isomorphism $$\tau:M^\infty:=\varinjlim (\dots\to M^i\buildrel{\tilde F}\over\to M^{i+1}\to\dots)\to M^{-\infty}:=\varinjlim (\dots\to M^i\buildrel{\tilde V}\over\to M^{i-1}\to\dots).$$ 
\end{definition}

Ekedahl defines in \cite{Ek}, p.37 a functor ${\bf Hodge}(.)$ from the category of virtual crystals to the category of $F$-gauges as follows: Let $(U,\underbar F, N)$ be a virtual crystal. Then, setting $M^i:=\underbar F^{-1}(p^iN)\cap N$, we get a filtration $\{M^i\}$ of $N$ satisfying
\begin{enumerate}
\item[(i)] $pM^i\subset M^{i+1}$.
\item[(ii)] $N=\cup_i M^i$.
\item[(iii)] $\underbar F$ maps $\cup_i p^{-i}M^i$ into and onto $\cup M^i$.
\end{enumerate}
Set $M:=\otimes_{i\in\ZZ} M^i$, $\tilde F$ the multiplication by $p$, $\tilde V$ the inclusion and $\tau$, the map deduced by the property (iii). Then ${\bf Hodge}(U,\underbar F, N):=(\otimes_{i\in\ZZ} M^i, \tilde F,\tilde V,\tau)$ is a F-gauge. 

Let $E$ be a $F$-crystal on $X^\#/\FF_q$. We denote ${\bf FG}(E,i)$, the $F$-gauge ${\bf Hodge}({\bf VC}(E,i))$ and ${\bf FG}(E)$ the complex of $F$-gauges whose degree $i$-term is ${\bf FG}(E,i)$ and derivation is the zero map.

\subsection{Coherent object in the derived category of $F$-gauges}

\begin{definition} An object $M$ in the derived category of $F$-gauges, $D(F-g)$ is called coherent if the canonical map $M\to \RR\ilim W/p^n\otimes^L_W M:=\hat M$ is an isomorphism and if $\FF_q\otimes^L_W M$ is a coherent $\FF_q$-complex. We denote $D^b_c(F-g)$ the derived category of bounded complexes of coherent $F$-gauges.    
\end{definition} 

\begin{theorem} Let $E$ be a $F$-crystal on $X^\#/\FF_q$. Then $${\bf FG}(E)\in 
D^b_c(F-g).$$
\end{theorem}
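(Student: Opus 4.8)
The plan is to verify the two conditions defining coherence for ${\bf FG}(E)$: that the canonical map to its derived $p$-completion is an isomorphism, and that $\FF_q \otimes^L_W {\bf FG}(E)$ is a coherent $\FF_q$-complex. Since ${\bf FG}(E)$ is the complex whose degree-$i$ term is the $F$-gauge ${\bf FG}(E,i) = {\bf Hodge}({\bf VC}(E,i))$ with zero differentials, and since $X^\#$ is proper, the cohomology groups $H^i_{crys}(X^\#/W, E)$ are finitely generated $W$-modules and vanish outside a bounded range of degrees; hence the underlying graded modules $M^i = \underbar F^{-1}(p^i N)\cap N$ are finitely generated over $W$ (being $W$-submodules of the finitely generated module $N = H^i_{crys}(X^\#/W,E)$, which is Noetherian). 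This already gives boundedness of the complex. First I would record that each $M^i$ is $p$-adically separated and complete (as a submodule of a finitely generated $W$-module, $W$ being $p$-adically complete Noetherian), and that the filtration $\{M^i\}$ stabilizes: by property (iii) and finite generation, $M^i = N$ for $i \gg 0$ and $M^i = 0$ for $i \ll 0$ (more precisely, $M^i = p^{?}\underbar F(\cdots)$ becomes stationary once $p^i N$ is large enough, using that $\underbar F$ is a $\sigma$-linear automorphism of $U$).

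Next I would check derived $p$-completeness. Since the terms $M^i$ are finitely generated over the Noetherian $p$-complete ring $W$, each $M^i$ is already $p$-adically complete and moreover $p$-torsion-free (submodule of $U = N\otimes K$ is $W$-torsion-free), so $\RR\ilim_n W/p^n \otimes^L_W M^i \simeq M^i$ termwise — the higher $\varprojlim$ vanishes and $W/p^n\otimes^L_W M^i = M^i/p^n M^i$ because of torsion-freeness. As the complex has zero differentials and only finitely many nonzero terms, the derived $p$-completion is computed termwise, giving ${\bf FG}(E) \iso \hat{{\bf FG}(E)}$. Here I should be a little careful: the derived tensor/limit is taken in the category of $F$-gauges, i.e. compatibly with $\tilde F$, $\tilde V$, $\tau$; but since these structure maps are (up to the $\sigma$-linear $\tau$) just multiplication by $p$ and inclusions, and $p$-completion is $W$-linear and commutes with $\sigma$, the $F$-gauge structure survives the completion, so nothing beyond the module-level statement is needed.

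For the second condition, $\FF_q \otimes^L_W {\bf FG}(E)$ has degree-$i$ term $\FF_q \otimes^L_W M^i$; again by torsion-freeness of $M^i$ this is concentrated in degree $0$ and equals $M^i/pM^i = M^i \otimes_W \FF_q$, a finite-dimensional $\FF_q$-vector space, with the induced graded structure and maps $\tilde F \bmod p$, $\tilde V \bmod p$, $\bar\tau$. A finite complex of finite-dimensional graded objects with such structure maps is by definition a coherent $\FF_q$-complex in the sense used for $F$-gauges, so this condition holds. I would conclude by assembling these observations: ${\bf FG}(E)$ is bounded, derived-$p$-complete, and reduces mod $p$ to a coherent $\FF_q$-complex, hence lies in $D^b_c(F\text{-}g)$.

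The main obstacle I anticipate is bookkeeping the $F$-gauge structure through the derived operations rather than any deep mathematics: one must confirm that $\RR\ilim$ and $-\otimes^L_W \FF_q$ are being computed in $D(F\text{-}g)$ and that the resulting objects inherit the graded maps correctly, in particular that the $\sigma$-linear isomorphism $\tau: M^\infty \to M^{-\infty}$ descends to the completion and the reduction — this uses that $M^\infty$ and $M^{-\infty}$ are themselves finitely generated (they are isomorphic to $N$ via the stabilization of the filtration), so $\tau$ is a $\sigma$-linear isomorphism of finitely generated $W$-modules and behaves well under both operations. The genuinely non-formal input, namely finite generation and boundedness of crystalline cohomology of the proper log-smooth $X^\#$ with coefficients in the $F$-crystal $E$, is standard and may be cited; everything else is the homological-algebra verification sketched above.
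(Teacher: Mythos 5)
Your overall strategy --- verifying the two conditions in the definition of coherence directly --- is not unreasonable, but your verification of the second condition rests on a false finiteness claim. The graded pieces of the gauge ${\bf FG}(E,j)$ are $M^i=\underline{F}^{-1}(p^iN)\cap N$ with $N$ the lattice coming from $H^j_{crys}(X^\#/W,E)$, and these are nonzero for \emph{every} $i$: choosing $a$ with $p^aN\subset \underline{F}(N)\subset p^{-a}N$, one gets $M^i=N$ for $i\ll 0$, while for $i\gg 0$ one has $p^{i+a}N\subset M^i$ and in fact $M^{i+1}=pM^i\neq 0$. So your stabilization statement ``$M^i=N$ for $i\gg 0$ and $M^i=0$ for $i\ll 0$'' is wrong in both directions and, more importantly, in substance: the filtration never becomes zero. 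Consequently $\FF_q\otimes_W M=\oplus_i M^i/pM^i$ has infinitely many nonzero graded pieces and is not ``a finite complex of finite-dimensional graded objects''; the assertion that it is therefore ``by definition a coherent $\FF_q$-complex'' is exactly the point that needs proof. Coherence of mod-$p$ gauge complexes in Ekedahl's sense is a genuine condition (finite dimensionality of each graded piece together with stabilization, e.g.\ $\tilde V$ the identity in low degrees and $\tilde F$ an isomorphism in high degrees), and checking it requires precisely the correct stabilization facts that your sketch misstates. Your treatment of the first condition (derived $p$-completeness) is essentially fine --- each $M^i$ is a finitely generated submodule of a $K$-vector space, hence $p$-torsion free and $p$-adically complete, and limits of graded objects are computed degreewise --- but the second half of the definition is where the content lies and it is not established.

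For comparison, the paper does not verify the definition by hand: it notes that ${\bf FG}(E)$ is a bounded complex (concentrated in degrees $0$ to $2\dim X$) and invokes Ekedahl's Proposition 5.2, which reduces coherence in $D^b(F\text{-}g)$ to finite generation over $W$ of the graded pieces $H^j({\bf FG}(E))^i={\bf FG}(E,j)^i$; this is then immediate because each such piece is a $W$-submodule of the finitely generated module $H^j_{crys}(X^\#/W,E)$, finiteness coming from properness and log-smoothness of $X^\#$. If you wish to keep your direct approach, you must either reprove the analogue of that reduction or spell out Ekedahl's definition of coherent $\FF_q$-gauge complexes and verify the stabilization properties $M^i=N$ (with $\tilde V=\mathrm{id}$) for $i\ll 0$ and $M^{i+1}=pM^i$ (with $\tilde F$ an isomorphism) for $i\gg 0$, which is where the actual work, modest as it is, resides.
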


\begin{proof} First note that ${\bf FG}(E)$ can be seen as a bounded complex of $F$-gauges concentrated in degree 0 to $2dim(X)$ and in particular as an object of $D^b(F-g)$.  By \cite{Ek}, Prop. 5.2, it is enough to check that $H^j({\bf FG}(E))^i={\bf FG}(E,j)^i$ is a finitely generated $W$-module for any $0\leq j\leq 2dim(X)$ and $0\leq i\leq N$, which is obvious since ${\bf FG}(E,j)^i$ is a sub $W$-module of the finitely generated $W$-module $H^j(X^\#/W,E)$.
\end{proof}

\subsection{Coherent object in the derived category of Raynaud modules associated to a $F$-log crystal}\label{Dbc}

\begin{definition}
We denote $R$ the $W$-graded algebra $R^0\oplus R^1$ generated by $F$ and $V$ in degree 0 and by $d$ in degree 1, subject to the relation:\\
$FV=p=VF$, $Fa=\sigma(a)F$, $aV=V(\sigma(a))$, $d^2=0$, $FdV=pd$, $Vd=pdV$, $dF=pFd$, $da=ad$ ($a\in W$). 
\end{definition}

We can see a graded $R$-module $M$ as a complex
$$\dots \to M^i\buildrel{d}\over\to M^{i+1}\buildrel{d}\over\to\dots$$
such that for any $i$, $M^i$ is a $R^0$-module endowed with a differential $d$ such that $FdV=pd$. We define the $n-$th degree shift $M\{n\}$ of $M$ as the graded $R$-module whose $i-th$ degree is $M^{n+i}$ and derivation is $(-1)^n d_M^{n+i}$.

\begin{definition}
\begin{enumerate}
\item We say that a $R$-module is elementary of Type I if it is a $R^0$-module, finitely generated over $W$ such that $V$ is topologically nilpotent. Such module can be written as the direct sum of a free $W$-module of finite rank with an action of $F$ such that the slopes are in $[0,1[$ and of a torsion part which is of finite length as $W$-module. The torsion part is an iterated extension of the module $(k,F,V=0)$ or ($k, F=\sigma,0)$.
$$\prod_{n\geq 0} kV^n\buildrel{d}\over\to \prod_{n\geq l} kdV^n$$
where $l\in \ZZ$ and $dV^n=F^{-n}d$ if $n<0$.
\item A graded $R$-module $M$ is said coherent if it admits a finite filtration whose quotients are degree shifts of elementary modules of type I or II. 
\item A complex $M$ of $R$-modules is said to be coherent if it is bounded with coherent cohomology. We denote $D^b_c(R)$ the full subcategory of $D(R)$ consisting of coherent complexes.
This is a triangulated subcategory of $D(R)$; in particular, the coherent modules form an abelian subcategory of $Mod(R)$ closed under extensions (\cite{I}, 2.4.8).
We denote $D^b_c(R)$ the full subcategory of $D(R)$ consisting of coherent complexes.
\end{enumerate}
\end{definition}

The main theorem of \cite{Ek} is the following result. 

\begin{theorem}\label{ek-main}(\cite{Ek}, Th. 5.3) There is an equivalence of categories:
$${\bf R}:D^b_c(F-g)\to D^b_c(R).$$
\end{theorem}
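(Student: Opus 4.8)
The plan is to follow the argument of Ekedahl. The first step is to make the functor ${\bf R}$ explicit at the level of (non-derived) categories and then derive it: to an $F$-gauge $(M,\tilde F,\tilde V,\tau)$ one attaches the graded $R$-module whose operators $F$, $V$ and $d$ are manufactured out of the degree $\pm 1$ maps $\tilde F,\tilde V$ together with the colimit presentations of $M^\infty$ and $M^{-\infty}$ and the $\sigma$-linear gluing $\tau$; the relations $FV=VF=p$, $FdV=pd$, $Vd=pdV$, etc., are then forced by the corresponding relations among $\tilde F$, $\tilde V$ and multiplication by $p$. One extends this termwise to complexes, checks that it sends quasi-isomorphisms to quasi-isomorphisms, and so descends it to $D(F-g)\to D(R)$.

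The second step is to see that ${\bf R}$ respects the coherent subcategories. Boundedness is clear from the construction, so the point is coherence of the cohomology. Here one uses the two available descriptions: on the gauge side $M\in D^b_c(F-g)$ is detected by the requirement that $M\to\RR\ilim W/p^n\otimes^L_W M$ be an isomorphism and that $\FF_q\otimes^L_W M$ be a coherent $\FF_q$-complex, while on the $R$-side coherence means the existence of a finite filtration by degree shifts of elementary modules of types I and II. One checks that ${\bf R}$ commutes with $\RR\ilim W/p^n\otimes^L_W(-)$ and carries the former condition to the latter, so that ${\bf R}$ is well defined as a functor $D^b_c(F-g)\to D^b_c(R)$ and, symmetrically, that a candidate quasi-inverse (built from an $F$-adic/$V$-adic filtration dual to the ${\bf Hodge}$ construction of Section~\ref{Dbc}) lands in $D^b_c(F-g)$.

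The third step is the equivalence itself, proved by compatible dévissages. On the $R$-side every coherent complex is an iterated extension of shifts and degree shifts of the elementary modules: on the one hand the type I modules (unit-root parts in slopes $[0,1[$ and the length-one torsion pieces $(k,F,V=0)$ and $(k,F=\sigma,0)$), on the other the type II modules $\prod_{n\ge 0}kV^n\to\prod_{n\ge l}kdV^n$, with $dV^n=F^{-n}d$ for $n<0$. Part of the work is to set up an analogous dévissage on the gauge side and to match the two lists of generators under ${\bf R}$; one then computes directly that ${\bf R}$ induces isomorphisms on all $\Hom$ and $\Ext$ groups between these elementary objects. A five-lemma argument along the distinguished triangles of the dévissage, with an induction on the length of the filtration, promotes this to full faithfulness of ${\bf R}$ on all of $D^b_c(F-g)$, and essential surjectivity is then immediate since each $R$-side generator is already in the essential image.

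The main obstacle is the interaction of the derived $p$-adic completion with the type II elementary modules, which fail to be finitely generated over $W$: one must verify on the nose that ${\bf R}$ and its quasi-inverse commute with $\RR\ilim W/p^n\otimes^L_W(-)$ and that the coherence condition transports exactly across the functor. Equivalently, on the gauge side the real content is that the single $\sigma$-linear isomorphism $\tau\colon M^\infty\to M^{-\infty}$ encodes correctly the gluing at \emph{both} ends of the $d$-complex underlying a Raynaud module, including the negative-index terms $F^{-n}d$; pinning down this identification is where the actual work lies, everything else being formal homological algebra.
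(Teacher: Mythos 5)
This theorem is not proved in the paper at all: it is quoted verbatim from Ekedahl (\cite{Ek}, Th.\ 5.3), so your sketch can only be measured against Ekedahl's argument, and against that standard your first step is the one that breaks. There is no termwise functor of abelian categories sending a gauge $(M,\tilde F,\tilde V,\tau)$ to a graded $R$-module by ``manufacturing'' $F$, $V$ and $d$ out of $\tilde F$, $\tilde V$ and $\tau$, which you could then derive. The equivalence genuinely redistributes the gauge grading into the cohomological grading: by Corollary \ref{slopes} and the corollary following it, for ${\bf R}(E)$ the degree-$i$ part of $H^j$ is the slope-$[i,i+1[$ piece of $H^{i+j}_{crys}(X^\#/W,E)$, whereas the gauge complex ${\bf FG}(E)$ carries all of $H^{i+j}_{crys}$ in the single cohomological degree $i+j$ with zero differential. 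A functor obtained by deriving an exact termwise construction preserves the cohomological degree of its input and cannot spread graded pieces diagonally across degrees; this is precisely why Ekedahl's construction proceeds through ``diagonal complexes'' (a tilting/bimodule-type derived construction) rather than through a naive functor on gauges, and why the differential $d$ on the $R$-side cannot be read off from $\tilde F$, $\tilde V$ alone.

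Beyond that, your steps 2 and 3 are a plan rather than a proof: producing a generating family of coherent gauges that matches the degree shifts of the type I and type II elementary $R$-modules, computing all $\Hom$ and $\Ext$ groups between these objects on both sides, and verifying compatibility with derived $p$-adic completion on the non-finitely-generated type II modules are exactly the substance of Ekedahl's book, and in your text they are asserted (``one checks'', ``one computes directly'') rather than established. The d\'evissage and five-lemma scaffolding is standard and fine, but with the functor itself misconstructed in step 1 and the generator computations missing, the sketch does not amount to a proof of the equivalence; if you want to include an argument here, the honest options are either to reproduce Ekedahl's diagonal-complex construction in detail or to leave the statement as a citation, as the paper does.
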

We denote $\bf S$ the inverse functor. Coming back to the hypothesis of section \ref{F-log}, let $E$ be a (finite locally free) $F$-crystal on a proper log-smooth variety $X^\#/\FF_q$ and denote ${\bf R}(E)$ the object of $D^b_c(R)$:
$${\bf R}(E):={\bf R}({\bf FG}(E)).$$

\subsection{Operations in $D^b_c(R)$}

\subsubsection{Translation} We can see a graded $R$-module $M$ as a complex
$$\dots \to M^i\buildrel{d}\over\to M^{i+1}\buildrel{d}\over\to\dots$$
such that for any $i$, $M^i$ is a $R^0$-module endowed with a differential $d$ such that $FdV=d$. We define the $n-$th translated $M\{n\}$ as the graded $R$-module whose $i-th$ degree is $M^{n+i}$ and derivation is $(-1)^n d_M^{n+i}$.

\subsubsection{Tate Twist} We can see a complex of graded $R$ module $M$ as a bicomplex $M^{i,j}$ where $M^{.,j}$ is a graded $R$-module and $M^{i,.}$ is a complex of $R^0$-module. For an integer $r$ and $M$ a complex of graded $R$-modules, we denote $M(r)$ the complex of graded $R$-modules whose associated bicomplex has entries $M(r)^{i,j}:=M^{i+r,j-r}$ and with the appropriate sign changes on the differentials.

\subsubsection{Simple complex} Let $M$ be a complex of graded $R$-module, we can associate to $M$ a simple complex of $W$-modules $sM$ such that 
$$sM^n:=\oplus_{i+j=n} M^{i,j}, d x^{i,j}= d'x^{i,j}+(-1)^i d"x^{i,j}.$$
This construction extends to give a functor
$$s:D^+(R)\to D(W).$$ If we restrict this functor to the category $D^b_c(R)$, then $sM$ is then a perfect complex of $W$-modules.

\subsubsection{Slope spectral sequence} Let $M$ be a complex of graded $R$-module, then $H^j(M)$ is a graded $R$-module with $i$-th degree:
$$(H^j(M))^i=H^j(M^{i,.}).$$

We have a spectral sequence

$$E_1^{i,j}:=(H^j(M))^i\Rightarrow E^{i+j}=H^{i+j}(sM)$$ called the slope spectral sequence

\begin{theorem} (\cite{I}, Corollary 2.5.4) Let $M\in D^b_c(R)$. The slope spectral sequence of $M$ degenerates at $E_1$ modulo torsion and $E_2$ modulo $W$-modules of finite length.
\end{theorem}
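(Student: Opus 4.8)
The plan is to reduce this statement to the corresponding result of Illusie--Raynaud for the de Rham--Witt complex, which is exactly what is cited in the parenthetical reference. The key observation is that the slope spectral sequence in question only depends on the underlying complex of graded $R$-modules, so we may work entirely within $D^b_c(R)$ without reference to where $M$ came from. The structure theory of coherent $R$-modules recalled in the preceding definition is the essential input: every $M \in D^b_c(R)$ admits a finite filtration whose graded pieces are degree shifts of elementary modules of type I or type II, and the slope spectral sequence behaves additively (up to the indicated error terms) under extensions and degree shifts.

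First I would observe that degeneration statements ``modulo torsion'' and ``modulo $W$-modules of finite length'' are stable under finite extensions of $R$-modules and under degree shifts $M \mapsto M\{n\}$ (a degree shift merely renumbers the terms of the complex and changes a sign, so it commutes with the formation of the spectral sequence). Hence, using the finite filtration from the definition of coherence together with the compatibility of the slope spectral sequence with distinguished triangles (a standard spectral sequence comparison argument), it suffices to check degeneration for $M$ an elementary module of type I or type II, viewed as a complex concentrated appropriately. Second, for the elementary modules one computes directly: a type I elementary module is a $R^0$-module finitely generated over $W$ with $V$ topologically nilpotent, decomposing as a free part with $F$-slopes in $[0,1[$ plus a finite-length torsion part, while a type II elementary module has the explicit shape $\prod_{n\geq 0} kV^n \buildrel{d}\over\to \prod_{n\geq l} kdV^n$. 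In each case the differential $d$ is known explicitly, so one reads off that modulo the free part (respectively modulo torsion) the differentials in the spectral sequence vanish at the $E_1$ (respectively $E_2$) stage.

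Third, I would assemble these local computations: run the spectral sequence associated to the filtration of $M$ by coherent submodules, note that the $E_1$-differentials land in torsion after passing to the associated graded, and that the remaining error is controlled by the finite-length contributions which disappear at $E_2$. The book-keeping of the error terms — keeping track of how ``torsion'' and ``finite length'' propagate through the filtration spectral sequence and through the slope spectral sequences of the successive quotients — is the main technical obstacle; it is precisely the content of Illusie--Raynaud's argument, and the cleanest route is simply to invoke \cite{I}, Corollary 2.5.4 verbatim, since the definition of $D^b_c(R)$ used here is designed to coincide with theirs. I expect the write-up to be essentially a one-line citation with, at most, a remark that the coherence condition in our definition matches the one in \cite{I}, so that the cited corollary applies directly to any $M \in D^b_c(R)$.
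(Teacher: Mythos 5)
Your conclusion is the right one: the paper gives no argument of its own here, it simply quotes Illusie's Corollary 2.5.4, relying on the fact that the coherence notion for $R$-complexes used in the paper is the one of \cite{I}, so the citation applies verbatim. Your preliminary sketch (filtration by shifts of elementary modules of type I and II plus direct computation) is the idea behind Illusie--Raynaud's proof, but since you end by recommending the one-line citation, your write-up matches the paper's treatment.
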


\begin{corollary}(\cite{MR}, (9), (10))\label{slopes} 
Let $M$ be a complex of graded $R$-module with only non-negative degrees and let $F'$ acts on $M^{i,j}$ as $p^{i}F$. Using the relation $Fdv=pd$ for the horizontal differential and the compatibility with $F$ for the vertical one, one can check that both differentials of $M^{i,j}$ commutes with $F'$ and we have an isomorphism of $F$-isocrystals over $k$:
$$((H^j(M))^i\otimes K,p^{i}F)=(H^{i+j}(sM)\otimes K,F')_{[i,i+1[},$$

where for an isocrystal $(M_K,F)$, we denote $(M_K,F)_{[i,i+1[}$ the piece of $M_K$ where $F'$ acts with slopes in $[i,i+1[$.
\end{corollary}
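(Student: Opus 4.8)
The plan is to combine the two preceding results (the degeneration theorem and the general bijectivity of the relevant $F$-action) with a careful bookkeeping of how $F'$ acts on the slope spectral sequence. First I would observe that the hypothesis ``only non-negative degrees'' ensures that $M^{i,j}=0$ for $i<0$, so that the weight/slope filtration by the $F'$-action is bounded below, and that on the $(i,j)$-entry the operator $F'$ is defined precisely as $p^i F$. The first genuinely substantive step is the routine but necessary verification that $F'$ commutes with both differentials of the bicomplex: for the horizontal differential one uses $FdV=pd$ (equivalently $Fd=pdF^{\,?}$ after untwisting, i.e. the relation $dF=pFd$ from the definition of $R$) to see that $d'\circ(p^iF)=(p^{i}F)\circ d'$ on $M^{i,\cdot}\to M^{i+1,\cdot}$ after accounting for the degree shift; for the vertical differential one uses that $d''$ is $R^0$-linear, hence commutes with $F$, and does not change $i$, so it commutes with $p^iF$ on the nose. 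Thus $F'$ is a $\sigma$-linear endomorphism of the simple complex $sM$ and induces one on $H^{i+j}(sM)$.

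Next I would bring in the degeneration statement (Theorem on degeneration of the slope spectral sequence of $M\in D^b_c(R)$): modulo torsion the spectral sequence degenerates at $E_1$, so rationally (after $\otimes K$) we get a canonical isomorphism of $K$-vector spaces
$$
(H^j(M))^i\otimes K \;\xrightarrow{\ \sim\ }\; \mathrm{gr}^i\, H^{i+j}(sM)\otimes K,
$$
where the grading on the right is the one induced by the (bounded, exhaustive) filtration abutting to $H^{i+j}(sM)$. The key point is then to identify this abutment filtration, after $\otimes K$, with the slope filtration for $F'$, and to match the pieces so that the $i$-th graded piece is exactly the slope-$[i,i+1[$ part $(H^{i+j}(sM)\otimes K, F')_{[i,i+1[}$. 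For this I would argue slopewise: on the $E_1$-term $(H^j(M))^i\otimes K$ the operator $F'$ acts as $p^iF$, and $F$ acting on $(H^j(M))^i\otimes K$ — which is a subquotient of $H^j(M^{i,\cdot})\otimes K$, a virtual-crystal-type piece — has all its slopes in $[0,1[$. This last fact is where I would invoke the elementary-module description: coherent $R$-modules are built from degree shifts of elementary modules of Type I and II, and for those the slopes of $F$ on the degree-$i$ part lie in $[0,1[$ by construction (the free part has slopes in $[0,1[$, the torsion part contributes slopes $0$ or edges). Consequently $F'=p^iF$ has slopes in $[i,i+1[$ on $(H^j(M))^i\otimes K$, so under the degeneration isomorphism the summand coming from row $i$ lands precisely in the slope-$[i,i+1[$ subspace of $H^{i+j}(sM)\otimes K$; summing over $i$ and using that the slope decomposition of an $F$-isocrystal over $k$ is canonical and splits the filtration, we obtain the asserted isomorphism of $F$-isocrystals $((H^j(M))^i\otimes K, p^iF)\cong (H^{i+j}(sM)\otimes K, F')_{[i,i+1[}$.

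I expect the main obstacle to be the compatibility of the degeneration isomorphism with the $F'$-action together with the identification of the abutment filtration with the slope filtration: one must check that the edge maps of the slope spectral sequence are $F'$-equivariant (this follows from $F'$-equivariance of the differentials, established in the first step, since all the maps in the spectral sequence are built from those differentials), and that after $\otimes K$ the only surviving filtration jumps occur exactly at the slope breaks $i$. The cleanest way to pin down the latter is to reduce, via the filtration by elementary modules and the exactness of $\otimes K$ on the relevant pieces, to the case of a single shifted elementary module, where the statement is an explicit computation: for an elementary module of Type I placed in degree $i$, $sM$ in the relevant total degree is concentrated so that $F'=p^iF$ has slopes exactly in $[i,i+1[$, and the Type II case is handled by the dual/complementary computation. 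Everything else is formal, and the signs in the differentials of $sM$ (the $(-1)^i$) play no role since they are invertible and $F'$-equivariant.
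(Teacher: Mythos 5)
Your argument is correct and is essentially the proof behind the cited formulas of Milne--Ramachandran (going back to Illusie--Raynaud), which the paper does not reproduce: $F'$-equivariance of both differentials via $dF=pFd$, $E_1$-degeneration modulo torsion, slopes of $F$ in $[0,1[$ on each $(H^j(M))^i\otimes K$ from the elementary-module filtration, and the canonical slope decomposition splitting the abutment filtration. The only small imprecision is the treatment of Type II modules: they need no ``dual/complementary computation'' --- being built from copies of $k$ they are $p$-torsion, hence vanish after $\otimes K$ and contribute nothing to either side.
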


\begin{lemma}(\cite{Ek}, VII, 3.) Let $M\in D^b_c(R)$. Then the slope spectral sequence of $M$ is the $\infty$-part of the spectral sequence of $F$-gauges
$$E^{i,j}_1={\bf S}(H^j(M)^i)\Rightarrow H^{i+j}({\bf S}(M)).$$
In particular, for any integer $k$, we have
$$H^k(sM)\simeq H^k({\bf S}(M))^\infty.$$
\end{lemma}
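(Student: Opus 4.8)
The plan is to pass from the spectral sequence of $F$-gauges to the slope spectral sequence by applying a single exact functor, the ``$\infty$-part'', so that the lemma reduces to an identification of $E_1$-terms; this is in substance the argument of \cite{Ek}, VII.3.

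First I would recall the $\infty$-part functor. A coherent $F$-gauge $(\M,\tilde F,\tilde V,\tau)$ has
$$\M^\infty:=\varinjlim\bigl(\dots\to \M^i\buildrel{\tilde F}\over\to \M^{i+1}\to\dots\bigr),$$
a finitely generated $W$-module, canonically identified via $\tau$ with $\M^{-\infty}=\varinjlim(\dots\to \M^i\buildrel{\tilde V}\over\to \M^{i-1}\to\dots)$. Applied degreewise this defines a functor $(-)^\infty:D^b_c(F-g)\to D(W)$ with values in perfect complexes of $W$-modules, and it is exact because filtered colimits are exact. The heart of the matter is then the functorial identification
$$(-)^\infty\circ{\bf S}\;\simeq\;s:D^b_c(R)\longrightarrow D(W).$$
To prove it I would use dévissage: both sides are exact triangulated functors, and every object of $D^b_c(R)$ is built, via triangles, shifts and degree shifts, from the elementary $R$-modules of Type I and II of Section \ref{Dbc}; so it suffices to check the identification on these, where Ekedahl's explicit description of the equivalence ${\bf R}$ of Theorem \ref{ek-main} (hence of its inverse ${\bf S}$) turns it into a direct computation — for instance, on the unit module $(W,F=\sigma,V=p\sigma^{-1})$ in degree $0$ both sides are $W$ placed in degree $0$.

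Granting this, I would view $M\in D^b_c(R)$ as a complex of graded $R$-modules, i.e. a bicomplex $M^{i,j}$ with $i$ the $R$-grading (the $d$-direction) and $j$ the complex degree; filtering along the $i$-direction is exactly what produces the slope spectral sequence $E_1^{i,j}=(H^j(M))^i\Rightarrow H^{i+j}(sM)$. The $F$-gauge ${\bf S}(H^j(M)^i)$ — obtained by applying the equivalence to the degree-$i$ graded piece of the coherent module $H^j(M)$ — then has $\infty$-part $(H^j(M))^i$ by the identification of the previous step, compatibly with the differentials $d_1$. Since $(-)^\infty$ is exact it commutes with the passage from each page of a spectral sequence to the next and with the associated graded of the abutment; hence applying $(-)^\infty$ to the spectral sequence of $F$-gauges $E_1^{i,j}={\bf S}(H^j(M)^i)\Rightarrow H^{i+j}({\bf S}(M))$ yields precisely the slope spectral sequence of $M$, which is the first assertion. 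For the ``in particular'' it then remains to apply $(-)^\infty$ to the $F$-gauge $H^k({\bf S}(M))$ and use exactness once more:
$$H^k({\bf S}(M))^\infty=(-)^\infty\bigl(H^k({\bf S}(M))\bigr)=H^k\bigl((-)^\infty{\bf S}(M)\bigr)\simeq H^k(sM).$$

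The step I expect to be the main obstacle is the identification $(-)^\infty\circ{\bf S}\simeq s$ — equivalently, that taking $\infty$-parts sends the $E_1$-page of the $F$-gauge spectral sequence to the $E_1$-page of the slope spectral sequence. This is where all the genuine content of \cite{Ek}, VII.3 lies: it rests on Ekedahl's explicit analysis of the equivalence ${\bf R}$ on elementary $R$-modules and on the compatibility between the truncation functors, the $\infty$-part functor and the functor $s$. Everything else is the formal behaviour of spectral sequences of filtered complexes under an exact functor.
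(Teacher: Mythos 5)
The paper offers no argument for this lemma at all: it is imported verbatim from Ekedahl (\cite{Ek}, VII, 3), so there is no internal proof to compare yours against, and your reconstruction has to stand on its own. Its skeleton is right: everything does reduce to a natural identification $(-)^\infty\circ{\bf S}\simeq s$ of exact functors $D^b_c(R)\to D(W)$, exactness of the filtered colimit defining $(-)^\infty$ then gives $H^k({\bf S}(M))^\infty\simeq H^k(sM)$, and the spectral-sequence assertion is the filtered refinement of this; your treatment of the ``in particular'' is fine once the identification is granted.

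The gap is in how you propose to obtain that identification. Dévissage to the elementary modules of Type I and II proves that two triangulated functors agree only if you already have a natural transformation between them: the objects on which a given \emph{morphism} of functors is an isomorphism form a thick subcategory, but the objects on which the two functors merely have isomorphic values do not, and unnatural object-by-object isomorphisms need not glue across triangles. Since ${\bf S}$ enters the paper only abstractly, as the quasi-inverse of Ekedahl's equivalence ${\bf R}$, you have no map $sM\to({\bf S}(M))^\infty$ with which to start the dévissage; producing one means unwinding the explicit construction of ${\bf R}$ (for instance building, for a complex of $F$-gauges $N$, a natural comparison between $N^\infty$ and $s{\bf R}(N)$, and then transporting it through the equivalence), and that is exactly the content of \cite{Ek}, VII.3 which you set aside as ``the main obstacle''. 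A second, smaller point: to conclude that the two spectral sequences coincide, and not merely that their $E_1$-terms and abutments match, the comparison must be compatible with the filtrations that define them (the filtration by the $R$-grading $i$ on $M$ on one side, the corresponding gauge filtration on ${\bf S}(M)$ on the other); an isomorphism in the derived category together with termwise $E_1$-identifications is not by itself sufficient. With a filtered natural comparison in hand, your appeal to exactness of $(-)^\infty$ to pass through cohomology, pages and abutments is correct.
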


\begin{corollary} let $E$ be a (finite locally free) $F$-crystal on a proper log-smooth variety $X^\#/\FF_q$. We have an isomorphism $$H^i(s{\bf R}(E))\simeq H^i_{crys}(X^\#/W,E)$$
compatible with the Frobenius operator. 
\end{corollary}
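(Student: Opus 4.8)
The plan is to chase the chain of identifications provided by the previous results and assemble them into the desired isomorphism. By construction $\mathbf{R}(E) = \mathbf{R}(\mathbf{FG}(E))$, so by the Lemma just proved (the one identifying the slope spectral sequence of $M \in D^b_c(R)$ with the $\infty$-part of the $F$-gauge spectral sequence of $\mathbf{S}(M)$), applied to $M = \mathbf{R}(E)$, we get
$$H^i(s\mathbf{R}(E)) \simeq H^i(\mathbf{S}(\mathbf{R}(E)))^\infty = H^i(\mathbf{FG}(E))^\infty,$$
since $\mathbf{S}$ is the inverse of $\mathbf{R}$ by Theorem~\ref{ek-main}. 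So it suffices to identify $H^i(\mathbf{FG}(E))^\infty$ with $H^i_{crys}(X^\#/W,E)$, compatibly with Frobenius.

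Next I would unwind the definition of $\mathbf{FG}(E)$: it is the complex of $F$-gauges with zero differential whose degree $i$ term is $\mathbf{FG}(E,i) = \mathbf{Hodge}(\mathbf{VC}(E,i))$. Because the differential is zero, $H^i(\mathbf{FG}(E)) = \mathbf{FG}(E,i)$ as an $F$-gauge, and hence $H^i(\mathbf{FG}(E))^\infty = \mathbf{FG}(E,i)^\infty$. Now I invoke the explicit description of the functor $\mathbf{Hodge}$ recalled above: starting from the virtual crystal $(U,\underline F,N)$ with $U = H^i_{crys}(X^\#/W,E)\otimes\QQ_p$ and $N = N(E,i) = H^i_{crys}(X^\#/W,E)$, one sets $M^j = \underline F^{-1}(p^j N)\cap N$. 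By property (ii), $N = \bigcup_j M^j$, and the transition maps $\tilde F\colon M^j \to M^{j+1}$ are multiplication by $p$ (as in Ekedahl's construction). Therefore
$$M^\infty = \varinjlim\bigl(\cdots \to M^j \xrightarrow{\;p\;} M^{j+1} \to \cdots\bigr),$$
and since for $j$ large enough $M^j = N$ (again by (ii), as $N$ is finitely generated), this colimit is just $N = H^i_{crys}(X^\#/W,E)$ — the multiplications by $p$ in the stable range are isomorphisms on $N\otimes\QQ_p$, but one must be slightly careful that the colimit over integral lattices still returns $N$ itself rather than $N\otimes\QQ_p$; this works because the transition maps for $j \gg 0$ are the identity once we have $M^j = N = M^{j+1}$ (no further multiplication by $p$ is forced on the integral level once $\underline F^{-1}(p^j N)\supseteq N$). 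Finally the Frobenius: the $\sigma$-linear structure $\tau$ on the $F$-gauge $\mathbf{FG}(E,i)$ deduced from property (iii) is, on the stable piece, precisely $\underline F^i_E = H^i(F_E)$ by the definition of $\mathbf{VC}(E,i)$; so the identification $H^i(s\mathbf{R}(E))\simeq H^i_{crys}(X^\#/W,E)$ is compatible with the Frobenius operators on both sides.

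The main obstacle is the integral bookkeeping in the last two steps: one must verify that passing from $\mathbf{FG}(E,i)^\infty$ (a colimit of integral lattices $M^j$ under multiplication-by-$p$ maps) recovers the lattice $H^i_{crys}(X^\#/W,E)$ and not merely the rational vector space $U$, and that the $\sigma$-linear isomorphism $\tau$ on $M^\infty$ matches $\underline F^i_E$ on the nose rather than up to a power of $p$. This amounts to carefully reading off Ekedahl's construction of $\mathbf{Hodge}$ and checking that the stabilization $M^j = N$ for $j \gg 0$ makes the colimit computation trivial; the compatibility with Frobenius then follows from property (iii) and the bijectivity of $H^i(\tau)$ established in the Example of Section~\ref{F-log}. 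Everything else is a formal concatenation of the previously stated equivalences.
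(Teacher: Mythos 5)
Your first half — $H^i(s\mathbf{R}(E))\simeq H^i(\mathbf{S}(\mathbf{R}(E)))^\infty=H^i(\mathbf{FG}(E))^\infty$ via the lemma and Theorem \ref{ek-main}, then $H^i(\mathbf{FG}(E))=\mathbf{Hodge}(\mathbf{VC}(E,i))$ because the differential is zero — is exactly the paper's argument. The gap is in your computation of the $\infty$-part. The filtration $M^j=\underline F^{-1}(p^jN)\cap N$ is \emph{decreasing} in $j$ (since $p^{j+1}N\subset p^jN$), and the stabilization $M^j=N$ happens for $j\ll 0$, not $j\gg 0$; for $j\gg 0$ one has $M^j=p^j\underline F^{-1}(N)\cap N$, which shrinks. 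So your assertion that ``for $j$ large enough $M^j=N$'' is false, and your own hedge shows why the argument as written cannot close: even if $M^j$ were constantly $N$, the transition maps in $M^\infty$ are $\tilde F=$ multiplication by $p$ (never the identity — that is the definition of $\tilde F$ in the Hodge construction), so the colimit would be $N\otimes\QQ_p$, not $N$. You have in effect confused $M^\infty$ with $M^{-\infty}$: it is the colimit along the inclusions $\tilde V$ (the $-\infty$-part) that stabilizes at $N=H^i_{crys}(X^\#/W,E)$.

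The correct way to finish, which is what the paper does, is to use the structural isomorphism $\tau$: either identify $M^{-\infty}=N$ by the stabilization just described and then transport through the $\sigma$-linear isomorphism $\tau:M^\infty\to M^{-\infty}$, or compute directly that
$$M^\infty\;\simeq\;\bigcup_j p^{-j}M^j\;=\;\underline F^{-1}(N)\quad\text{(inside }U\text{)},$$
which property (iii) (i.e.\ $\tau$, induced by $\underline F=\underline F^i_E$) maps bijectively and $\sigma$-linearly onto $N=H^i_{crys}(X^\#/W,E)$; this simultaneously gives the compatibility with Frobenius. So the missing ingredient is precisely the use of $\tau$ (equivalently property (iii)) to pass from the $\infty$-part to the crystalline lattice; the colimit along $\tilde F$ does not by itself return $N$.
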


\begin{proof} By the previous lemma, we have $H^i(s{\bf R}(E))\simeq H^i({\bf S}({\bf R}(E)))^\infty
=H^i({\bf FG}(E))^\infty$ by Th. \ref{ek-main}. Now since the derivation is zero on ${\bf FG}(E)$, we have $H^i({\bf FG}(E))={\bf Hodge}({\bf VC}(E,i))$ and its $\infty$ part is isomorphic, via the morphism $\tau$ given by its structure of $F$-gauge, to its $-\infty$ part, which is $H^i_{crys}(X^\#/W,E)$.
\end{proof}

\subsubsection{Internal Homs and tensor products}
One can construct (\cite{I}, 2.6.1.10) a tensor product bifunctor
$$.*^L.:D^b_c(R)\times D^b_c(R)\to D^b_c(R)$$
for which $\underbar W$ is the unit object. 

We can also construct (see \cite{I},2.6.2) an internal Hom bifunctor:
$$\RR\underbar{Hom}(.,.):D^b_c(R)\times D^b_c(R)\to D^b_c(R).$$

\subsubsection{Homological algebra in $D^b_c(R)$}
For the convenience of the reader, we give a summary of the section 4 of \cite{MR}. We refer the reader to loc. sic for the proofs.\\

\noindent Let $\cal S$ denote the category of sheaves of commutative groups on the \'etale site of perfect schemes over Spec(k), and $\cal G$ denote the abelian subcategory of commutative perfect algebraic group schemes killed by a power of $p$. We will denote ${\cal S}_.$ (resp. ${\cal G}_.$) the category of projective system $(P_m)_m$ of $\ZZ/p^m$-modules in $\cal S$
(resp. in $\cal G$).

We have a functor 
$$.^F:D^B_c(R)\to D^{perf}(\ZZ_p)$$
$$M\mapsto M^F:=\RR Hom({\underbar W},M).$$

\begin{lemma}\label{P^F} Let $M,N\in D^b_c(R)$. We have 
$$\RR\underbar{Hom}(M,N)^F=\RR Hom(M,N).$$
\end{lemma}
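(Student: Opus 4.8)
The plan is to unwind both sides of the claimed identity
$$\RR\underbar{Hom}(M,N)^F=\RR Hom(M,N)$$
by applying the definition of the functor $(.)^F$ to the internal Hom object. By definition $(.)^F = \RR Hom(\underbar W, -)$, so the left-hand side is $\RR Hom\bigl(\underbar W, \RR\underbar{Hom}(M,N)\bigr)$. The whole statement is therefore an adjunction: I want the tensor-hom adjunction in $D^b_c(R)$ relating the internal Hom bifunctor $\RR\underbar{Hom}(-,-)$ and the internal tensor product $.*^L.$, namely
$$\RR Hom\bigl(A, \RR\underbar{Hom}(B,C)\bigr)\simeq \RR Hom(A *^L B, C),$$
together with the fact that $\underbar W$ is the unit object for $*^L$ (both stated in the excerpt, referencing \cite{I}, 2.6.1.10 and 2.6.2). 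Applying this with $A=\underbar W$, $B=M$, $C=N$ and using $\underbar W *^L M \simeq M$ gives
$$\RR Hom\bigl(\underbar W, \RR\underbar{Hom}(M,N)\bigr)\simeq \RR Hom(\underbar W *^L M, N)\simeq \RR Hom(M,N),$$
which is exactly the assertion.

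Concretely, first I would record that $\RR\underbar{Hom}(M,N)^F = \RR Hom(\underbar W, \RR\underbar{Hom}(M,N))$ by definition of $(.)^F$. Second, I would invoke the internal adjunction between $\RR\underbar{Hom}$ and $*^L$ in $D^b_c(R)$ from Illusie; this is the step requiring the most care, since one must check that the adjunction isomorphism is the expected one (and that it is compatible with the $\ZZ_p$-structure on $\RR Hom$-complexes, so that the identification lands in $D^{perf}(\ZZ_p)$ as claimed for the $(.)^F$ functor). Third, I would use the unit property $\underbar W *^L M \simeq M$ to simplify $\RR Hom(\underbar W *^L M, N) \simeq \RR Hom(M,N)$, completing the chain of isomorphisms.

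The main obstacle I anticipate is purely bookkeeping rather than conceptual: ensuring that the internal adjunction isomorphism stated in \cite{I} is available in the exact generality needed here (bounded coherent complexes over the graded Raynaud ring $R$, not just over $R^0$ or in the classical de Rham-Witt setting), and that composing it with the unit identification yields a \emph{natural} isomorphism in $M$ and $N$ — which is what one needs downstream when this lemma is used to manipulate duality pairings. Since both the tensor product and internal Hom bifunctors on $D^b_c(R)$ are quoted from loc. cit. with $\underbar W$ as unit, all the required inputs are in place, and the proof reduces to the three displayed isomorphisms above chained together.
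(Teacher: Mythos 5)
Your proof is correct and is the expected one: the paper gives no independent argument for this lemma (it belongs to the summary of \cite{MR}, Section 4, with proofs deferred to loc.\ cit.), and since $.^F=\RR Hom(\underbar{W},-)$ by definition, the statement is exactly the tensor-Hom adjunction for $(.*^L.,\RR\underbar{Hom})$ evaluated at the unit, $\RR\underbar{Hom}(M,N)^F=\RR Hom(\underbar{W},\RR\underbar{Hom}(M,N))\simeq\RR Hom(\underbar{W}*^L M,N)\simeq\RR Hom(M,N)$. The single input not quoted verbatim in the paper --- the external adjunction between $*^L$ and $\RR\underbar{Hom}$ on $D^b_c(R)$ --- is indeed available for coherent complexes in \cite{I}, 2.6 and \cite{Ek}, so the step you flag as delicate does go through.
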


We can sheafify the construction as follows: if $M$ is a graded $R$ -module, we denote 
${\cal M}^i_m$ the sheaf in $\cal S$:
$$Spec(A)\mapsto M^i_m\otimes_W W(A),$$
where $M^i_m=R_m\otimes_{R} M^i$.

For $m$ variable, ${\cal M}^i:=({\cal M}^i_m)_m$ is a well-defined of ${\cal S}_.$. Finally, $({\cal M}^.,d,F\otimes\sigma_{W(A)},V\otimes\sigma_{W(A)}^{-1})$ has a structure of $R_.$-module object of ${\cal S}_.$.

We set ${\cal M}_.^F:=Cone({\cal M}_.^0\buildrel{1-F}\over\to {\cal M}_.^0)[-1]$.

We also have a derived functor 

$$\RR\Gamma(Spec(k)_{et},.):D({\cal S}_.)\to D(\ZZ_p),$$
$$(M_m)_m\mapsto \RR\limp_m \RR\Gamma(Spec(k)_{et},.M_m).$$

\begin{theorem}\label{Mi4.6-8}  Let $M\in D^b_c(R)$.

\begin{enumerate}
\item $$\RR\Gamma(Spec(k)_{et}, {\cal M}_.^F)=M^F.$$
\item For any integer $r$ and for any $j$ we have a short exact sequence:
$$0\to U^j\to H^j({\cal M}(r)_.^F)\to D^j\to 0$$
where $U^j$ is a connected unipotent perfect algebraic and $D^j$ a profinite \'etale group.
\item $D^j(\bar k)$ is a finitely generated $\ZZ_p$-module and we have 
$$D^j(\bar k)\otimes\QQ_p\simeq (H^r(H^j(M))\otimes \bar K)^{F\otimes\sigma}.$$
\end{enumerate}

\end{theorem}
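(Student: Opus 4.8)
The plan is to reduce everything to the three statements of the Milne--Ramachandran paper \cite{MR} by identifying the sheafified complex ${\cal M}(r)_.^F$ with an object whose étale cohomology over $\Spec(k)$ is already understood there. First I would record that, because $R_m = R/p^m R$ is a graded algebra and $M \in D^b_c(R)$ is bounded with coherent cohomology, each $M^i_m = R_m \otimes_R M^i$ is a perfect complex of $W_m$-modules of bounded amplitude; the formation $\Spec(A) \mapsto M^i_m \otimes_W W(A)$ is therefore exact in $A$ for $A$ a perfect $k$-algebra (since $W(A)$ is flat over $W$ for $A$ perfect, $W(A)$ being $p$-torsion-free and $W_m(A) = W(A)/p^m$), so $({\cal M}^\bullet, d, F, V)$ is genuinely a complex of objects of ${\cal S}_.$, and the cone ${\cal M}_.^F = \mathrm{Cone}({\cal M}_.^0 \xrightarrow{1-F} {\cal M}_.^0)[-1]$ makes sense in $D({\cal S}_.)$.

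For part (1), the key point is the compatibility of the two ways of computing $M^F = \RR\Hom(\underbar W, M)$: unwinding the definition of $\RR\Hom$ in $D^b_c(R)$ following \cite{I}, 2.6, or equivalently using Lemma \ref{P^F} with $N = M$ and the identity object, one sees that $\RR\Hom(\underbar W, M)$ is computed by the two-term complex $[M^0 \xrightarrow{1-F} M^0]$ placed in the right degrees — this is exactly the ``$F$-cohomology'' description. On the other hand, $\RR\Gamma(\Spec(k)_{et}, -)$ applied to ${\cal M}_.^F$ first takes $\RR\Gamma$ of each ${\cal M}^i_m$, which over the perfect field $k$ just recovers $M^i_m$ (the sheaf ${\cal M}^i_m$ evaluated on $\Spec(k)$ is $M^i_m \otimes_W W(k) = M^i_m$, and higher étale cohomology over a perfect field of a perfect-scheme sheaf of this type vanishes — this uses that $\Spec(k)$ has étale cohomological dimension issues controlled as in \cite{MR}, section 4), then takes $\RR\varprojlim_m$, which recovers the $p$-adic completion, and this is an isomorphism because $M$ is coherent. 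Assembling the cone $1-F$ on top of this gives precisely $M^F$. I would cite \cite{MR}, Lemma 4.6 (or its analogue) for the clean statement and just indicate the identification.

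For parts (2) and (3), I would apply the structure theory of coherent $R$-modules: by the definition in section \ref{Dbc}, $H^j(M)$, being a coherent graded $R$-module, has a finite filtration whose graded pieces are degree-shifts of elementary modules of Type I and Type II. Twisting by $(r)$ shifts the grading/slope bookkeeping, and one reduces by dévissage (the long exact sequences in cohomology respecting the $U$--$D$ formalism) to the two elementary cases. For an elementary Type I module the relevant $H^j$ of the sheafified $1-F$ complex splits, over $\bar k$, into a unipotent connected part (coming from the torsion summands $(k, F, 0)$, $(k, \sigma, 0)$ and from the Artin--Schreier-type operator $1-F$ on the free part with slopes in $[0,1[$, whose kernel/cokernel as a perfect group scheme is connected unipotent) and a profinite étale part (the $\ZZ_p$-lattice fixed by Frobenius, whose rank after $\otimes \QQ_p$ is $\dim (H^r(H^j(M)) \otimes \bar K)^{F \otimes \sigma}$ by the standard Lang/Dieudonné computation — $F$ acting with slope exactly $r$ contributes, all others do not, which is exactly the $[i,i+1[$-slope bookkeeping of Corollary \ref{slopes}). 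The Type II modules contribute only to the unipotent part. The short exact sequence $0 \to U^j \to H^j({\cal M}(r)_.^F) \to D^j \to 0$ is then the connected-étale sequence of the resulting commutative perfect group scheme, finiteness of $D^j(\bar k)$ over $\ZZ_p$ follows from coherence, and the identification of $D^j(\bar k) \otimes \QQ_p$ with the Frobenius-fixed part of $H^r(H^j(M)) \otimes \bar K$ is the slope computation just described.

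The main obstacle I anticipate is not any single step but the careful bookkeeping in part (3): pinning down exactly which slopes of the slope spectral sequence of $M$ survive under the operator $1-F$ after the Tate twist $(r)$, and checking that the ``$F \otimes \sigma$''-invariants over $\bar K$ match the étale part rather than leaking into the unipotent part. This is where one must invoke the degeneration statements for the slope spectral sequence (the theorem of \cite{I}, 2.5.4 quoted above) together with Corollary \ref{slopes} identifying $(H^j(M))^i \otimes K$ with the slope-$[i,i+1[$ piece of $H^{i+j}(sM) \otimes K$, and then translate that into a statement about Frobenius fixed points. Since all of this is already done in \cite{MR}, section 4 for the case at hand, the honest content of the proof is verifying that nothing in their argument used the triviality of the coefficients — which it does not, everything being formal in $D^b_c(R)$ — so I would present the proof as a reduction to \cite{MR}, (4.6)--(4.8), with the elementary-module dévissage spelled out only as far as needed to make that reduction transparent.
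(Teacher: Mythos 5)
Your proposal is correct and takes essentially the same route as the paper, which in fact gives no proof of this theorem at all: it appears in a subsection explicitly introduced as a summary of \cite{MR}, Section 4 (hence the label), with the proofs referred to loc.\ cit., and since \cite{MR} establish (4.6)--(4.8) for an arbitrary $M\in D^b_c(R)$ --- precisely the generality stated here, with no coefficients entering the statement --- your reduction to \cite{MR} is exactly what the paper does, and your worry about ``triviality of coefficients'' is vacuous for this particular statement. One caution on a detail you present as formal: the identification $M^F=\RR\Hom(\underbar{W},M)\simeq \mathrm{Cone}\bigl(1-F\colon M^0\to M^0\bigr)[-1]$ is not obtained by ``unwinding the definition'' (already at the underived level $\Hom_R(\underbar{W},M)$ imposes $dm=0$ as well as $Fm=m$, and the derived statement requires a projective resolution of $\underbar{W}$ over $R$ together with coherence of $M$), nor does Lemma \ref{P^F} give it; it is a genuine lemma of \cite{MR}/\cite{IR}, but since you defer to \cite{MR} for the clean statement anyway, this does not affect the soundness of your reduction.
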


In particular, we conclude from the second assertion of the previous theorem:

\begin{corollary} Let $M\in D^b_c(R)$. Then
$${\cal M}_.^F\in D^b_{{\cal G}_.}({\cal S}_.),$$
the derived category of complex of sheaves in ${\cal S}_.$ whose cohomology lies in ${\cal G}_.$. 

\end{corollary}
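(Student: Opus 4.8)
The plan is to deduce the corollary directly from the second assertion of Theorem~\ref{Mi4.6-8}, together with the fact that the complex $\mathcal{M}_.^F$ is bounded. First I would recall that $\mathcal{M}_.^F$ is, by construction, the mapping cone $\mathrm{Cone}(\mathcal{M}_.^0 \xrightarrow{1-F} \mathcal{M}_.^0)[-1]$ of a two-term complex of $R_.$-module objects of $\mathcal{S}_.$; since $M \in D^b_c(R)$ has bounded cohomology, the graded pieces $\mathcal{M}^i$ are nonzero only for $i$ in a finite range, so $\mathcal{M}_.^F$ is a bounded object of $D(\mathcal{S}_.)$. Hence it suffices to check that each cohomology sheaf $H^j(\mathcal{M}_.^F)$ lies in $\mathcal{G}_.$, i.e. is (a projective system of) a commutative perfect algebraic group scheme over $\mathrm{Spec}(k)$ killed by a power of $p$.

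The key step is then to invoke Theorem~\ref{Mi4.6-8}(2) with $r=0$: for every $j$ there is a short exact sequence
$$0\to U^j\to H^j(\mathcal{M}_.^F)\to D^j\to 0,$$
where $U^j$ is a connected unipotent perfect algebraic group and $D^j$ is a profinite \'etale group. Both terms are (pro-)objects of $\mathcal{G}_.$: $U^j$ is visibly a commutative perfect algebraic group scheme killed by a power of $p$, while $D^j$, being profinite \'etale over $\mathrm{Spec}(k)$ and (after assertion (3)) a finitely generated $\ZZ_p$-module at the level of $\bar k$-points, is a projective system of finite \'etale group schemes killed by powers of $p$, hence a pro-object of $\mathcal{G}_.$ in the precise sense of $\mathcal{G}_.$. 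Since $\mathcal{G}_.$ is an abelian subcategory of $\mathcal{S}_.$ closed under extensions, $H^j(\mathcal{M}_.^F)$ lies in $\mathcal{G}_.$ as well.

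Combining these two observations — boundedness of $\mathcal{M}_.^F$ and membership of each cohomology sheaf in $\mathcal{G}_.$ — gives exactly $\mathcal{M}_.^F \in D^b_{\mathcal{G}_.}(\mathcal{S}_.)$, which is the assertion. The only point requiring a little care, and the main (mild) obstacle, is the bookkeeping at the level of the pro-system: one must make sure the short exact sequence of Theorem~\ref{Mi4.6-8}(2) is compatible with the transition maps of the projective system indexed by $m$, so that $U^j$ and $D^j$ are genuinely objects of $\mathcal{S}_.$ (not just of $\mathcal{S}$ for each $m$ separately), and that the extension closure of $\mathcal{G}_.$ inside $\mathcal{S}_.$ is applied in the category of pro-systems. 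All of this is already packaged in the statements we are allowed to cite, so the proof is short.
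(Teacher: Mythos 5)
Your proposal is correct and follows essentially the same route as the paper, which deduces the corollary directly from the second assertion of Theorem~\ref{Mi4.6-8}: each $H^j(\mathcal{M}_.^F)$ is an extension of a profinite \'etale group by a connected unipotent perfect algebraic group, hence lies in $\mathcal{G}_.$, and boundedness is clear. Your extra remarks on compatibility with the pro-system and closure of $\mathcal{G}_.$ under extensions are reasonable bookkeeping that the paper leaves implicit.
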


Applying this corollary to $P=\RR\underbar{Hom}(M,N)$, with $M,N\in D^b_c(R)$, we obtain a functor:

$$\RR{\cal H}om(.,.): D^b_c(R)\times D^b_c(R)\to D^b_{{\cal G}_.}({\cal S}_.)$$
$$(M,N)\mapsto {\cal P}_.^F.$$ 

We deduce from the first assertion of Theorem \ref{Mi4.6-8} applied to $\RR\underbar{Hom}(M,N)$ and from the equality of Lemma \ref{P^F}:

\begin{theorem}\label{Mi4.9} Let $M,N\in D^b_c(R)$.
$$\RR\Gamma(Spec(k)_{et}, \RR{\cal H}om(M,N))=\RR Hom(M,N).$$
\end{theorem}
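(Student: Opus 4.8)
The plan is to combine the two results we have just established: the first assertion of Theorem \ref{Mi4.6-8}, applied not to a bare coherent complex but to the internal Hom $\RR\underbar{Hom}(M,N)$, together with the identification of Lemma \ref{P^F}. More precisely, set $P:=\RR\underbar{Hom}(M,N)$; by the internal Hom bifunctor construction this is again an object of $D^b_c(R)$, so Theorem \ref{Mi4.6-8}(1) applies to it and gives
\[
\RR\Gamma(\Spec(k)_{et},{\cal P}_.^F)=P^F.
\]
By definition ${\cal P}_.^F={\cal P}_.^F$ is exactly the complex denoted $\RR{\cal H}om(M,N)$ under the functor introduced just before the statement, so the left-hand side is $\RR\Gamma(\Spec(k)_{et},\RR{\cal H}om(M,N))$. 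It remains to identify the right-hand side $P^F=\RR Hom({\underbar W},P)=\RR Hom({\underbar W},\RR\underbar{Hom}(M,N))$ with $\RR Hom(M,N)$, and this is precisely the content of Lemma \ref{P^F} (the adjunction between $\RR\underbar{Hom}$ and the tensor product $*^L$, using that ${\underbar W}$ is the unit object, which collapses $\RR Hom({\underbar W},\RR\underbar{Hom}(M,N))$ to $\RR Hom(M\,{*^L}\,{\underbar W},N)=\RR Hom(M,N)$). Chaining the two equalities yields the asserted formula.

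The steps, in order, are: first, invoke that $\RR\underbar{Hom}(M,N)\in D^b_c(R)$ so that the machinery of Theorem \ref{Mi4.6-8} is available for it; second, apply Theorem \ref{Mi4.6-8}(1) to $P=\RR\underbar{Hom}(M,N)$ to obtain $\RR\Gamma(\Spec(k)_{et},{\cal P}_.^F)=P^F$; third, recognize that by the very definition of the functor $\RR{\cal H}om(.,.)$ the complex ${\cal P}_.^F$ is $\RR{\cal H}om(M,N)$, so the left side is what we want; fourth, apply Lemma \ref{P^F} to rewrite $P^F=\RR Hom(M,N)$; and finally, combine. Each of these is either a definitional unwinding or a direct citation of a result stated above, so the proof is short.

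The only point that requires a little care — and the place I would expect a referee to look — is the compatibility of the sheafified functor $\RR{\cal H}om$ (built via $P\mapsto{\cal P}_.^F$, i.e. first forming the internal Hom in $D^b_c(R)$ and then sheafifying along $m$) with the construction $P\mapsto {\cal P}_.^F$ to which Theorem \ref{Mi4.6-8}(1) literally applies: one must check that ``sheafify, then take the $F$-fixed cone'' agrees on the nose with the object we called $\RR{\cal H}om(M,N)$, rather than only up to some comparison. This is guaranteed by the way ${\cal M}_.^F$ was defined as $Cone({\cal M}_.^0\xrightarrow{1-F}{\cal M}_.^0)[-1]$ functorially in the $R$-module, applied to the (coherent) internal Hom module, so no new input is needed; but it is the one spot where the argument is bookkeeping rather than a pure quotation. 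Once that is granted, the theorem follows immediately from Theorem \ref{Mi4.6-8}(1) and Lemma \ref{P^F}.
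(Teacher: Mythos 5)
Your proposal is correct and follows exactly the route taken in the paper: the statement is deduced by applying the first assertion of Theorem \ref{Mi4.6-8} to $P=\RR\underbar{Hom}(M,N)$ (which lies in $D^b_c(R)$ by the internal Hom construction) and then invoking Lemma \ref{P^F} to identify $P^F$ with $\RR Hom(M,N)$. Your extra remark about the compatibility of the sheafification with the definition of $\RR{\cal H}om$ is a reasonable point of care, but it is indeed just a definitional unwinding, as in the paper.
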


\end{section}

\begin{section}{The main Theorem of Milne-Ramachandran}
Again, for the comfort of the reader, we state the main result of Milne and Ramachandran and sketch their proof.\\

\noindent Let $\Gamma:=Gal(\bar k/k)$, topologically generated by the Frobenius element $\gamma:x\mapsto x^q$. We have a natural exact functor
$$D^b_c(R)\to D^b_c({\bar R}), M\mapsto \bar M:=M\otimes_W \bar{W},$$ moreover $\bar M$ is endowed with an action of $\Gamma$. For a $\Gamma$-module $M$, we denote $M^\Gamma:=H^0(\Gamma,M)$ and $M_\Gamma=H^1(\Gamma,M)$.
  
   We have a Horschild-Serre spectral sequence:
 
 $$H^i(\Gamma, Ext^j(\bar M, \bar N))=> Ext^{i+j}(M,N).$$
 
 We deduce the following composed map:
 
 $$d^j: Ext^j(M,N)\to Ext^j(\bar M, \bar N)^\Gamma\to Ext^j(\bar M, \bar N)\to Ext^j(\bar M, \bar N)_\Gamma\to Ext^{j+1}(M,N),$$
where the first map comes from the map $E^j\to E^{0,j}$ of the previous spectral sequence, the second map is the canonical inclusion, the third map is the canonical projection and the last map corresponds to the map $E^{1,j}\to E^{j+1}$. 

The same formalism as in \cite{MR}, p.24 shows that $E(M,N):=(Ext^.(M,N),d^.)$ defines a complex. 

\subsection{Statement of the theorem}

Let $M,N\in D^b_c(R)$ and denote $P:=\RR{\underbar Hom}(M,N)$. Recall that $P\in D^b_c(R)$ so that $R_1\otimes^L_{R} P$ is a bounded complex of graded $k$-vector spaces whose cohomology groups have finite dimensions. We denote 

$$h^{i,j}(P):=dim_k H^j(R_1\otimes^L_{R} P)^i,$$ called the Hodge numbers of $P$.

Note also that for any $j$, $H^j(sP)\otimes K$ is an $F$-isocrystal. We denote 

$$Z(P,t):=\prod_j det(1-tF^a, H^j(sP)\otimes K)^{(-1)^{j+1}}.$$

\begin{theorem}(\cite{MR}, Th. 0.1)\label{MR-main} Let $r$ be an integer and assume that $q^r$ is not a multiple root of the minimum polynomial of $F^a$ acting on $H^j(sP)\otimes K$ for any $j$. 

\begin{enumerate}
\item The groups $Ext^j(M,N(r))$ are finitely generated $\ZZ_p$-modules and the alternating sum of their rank is zero. 

\item The Zeta function $Z(P,t)$ of $P$ has a pole at $t=q^{-r}$ of order 
$$\rho:=\sum_j (-1)^{j+1}j.rank_{\ZZ_p}(Ext^j(M,N(r)).$$

\item The cohomology groups of the complex $E(M,N(r))$ are finite and the alternating product of their orders $\chi(M,N(r))$ satisfies
$$| lim_{t \to q^{-r}} Z(P,t) (1-q^rt)^\rho|_p^{-1}=\chi(M,N(r)).q^{\chi(P,r)}$$
where 
$${\chi(P,r)}=\sum_{i,j, i\leq r} (-1)^{i+j}(r-i)h^{i,j}(P)$$
and the $p$-adic valuation $|.|_p$ has been normalized so that $|p^r\frac{m}{n}|_p^{-1}=p^r$ if $m$ an $n$ are prime to $p$.
\end{enumerate}
\end{theorem}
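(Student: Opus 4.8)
The plan is to reduce everything to the coherent $\ZZ_p$-cohomology machinery already assembled. Set $P := \RR\underbar{Hom}(M,N(r))$, a coherent $R$-complex by the closure of $D^b_c(R)$ under internal Hom and Tate twist. By Lemma \ref{P^F} we have $\RR\underbar{Hom}(M,N(r))^F = \RR\Hom(M,N(r))$, so $\Ext^j(M,N(r)) = H^j(P^F)$; and by Theorem \ref{Mi4.6-8}(1) applied to $\RR\underbar{Hom}(M,N(r))$, $P^F = \RR\Gamma(\Spec(k)_{et},\mathcal P_.^F)$. The sheaf complex $\mathcal P_.^F$ lives in $D^b_{\mathcal G_.}(\mathcal S_.)$ by the corollary after Theorem \ref{Mi4.6-8}, so each $H^j(\mathcal P_.^F)$ sits in a short exact sequence $0 \to U^j \to H^j(\mathcal P_.^F) \to D^j \to 0$ with $U^j$ connected unipotent perfect algebraic and $D^j$ profinite \'etale. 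Taking $\Gamma$-cohomology over $\Spec(k)$ and using that connected unipotent perfect groups are uniquely $\gamma-1$-divisible (so contribute nothing to $H^*(\Gamma,-)$) yields the usual two-term exact sequences computing $\Ext^j(M,N(r))$ in terms of $D^j(\bar k)^\Gamma$ and $D^{j-1}(\bar k)_\Gamma$; finite generation over $\ZZ_p$ then follows from Theorem \ref{Mi4.6-8}(3), and the vanishing of the alternating sum of ranks is the Euler-characteristic identity $\sum (-1)^j \rank D^j(\bar k)^\Gamma = \sum(-1)^j \rank D^j(\bar k)_\Gamma$ coming from the fact that $\rank D^j(\bar k)^\Gamma = \rank D^j(\bar k)_\Gamma$ for each $j$ (kernel and cokernel of $\gamma-1$ on a finitely generated $\ZZ_p$-module have equal rank). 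This proves (1).

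For (2), I would identify the $\ZZ_p$-ranks just computed with multiplicities of the eigenvalue $q^r$ of $F^a$ on the slope-$[r,r+1[$ pieces of $H^*(sP)$. By Corollary \ref{slopes} (applied to the bicomplex underlying $P$, with $F'$ acting as $p^iF$ in bidegree $(i,j)$), one has $(H^j(H^\bullet(M))^i \otimes K, p^iF) = (H^{i+j}(sP)\otimes K, F')_{[i,i+1[}$, and Theorem \ref{Mi4.6-8}(3) gives $D^j(\bar k)\otimes\QQ_p \simeq (H^r(H^j(P))\otimes\bar K)^{F\otimes\sigma}$, i.e. the $F^a=1$ (equivalently $F'^a = q^r$) subspace of the slope-$[r,r+1[$ part. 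Under the hypothesis that $q^r$ is not a multiple root of the minimal polynomial of $F^a$ on $H^j(sP)\otimes K$, this eigenspace has dimension equal to the multiplicity of $q^r$ as a root of $\det(1-tF^a, H^j(sP)\otimes K)$ at $t=q^{-r}$. Summing the local contributions $(-1)^{j+1}$ (from the alternating product defining $Z(P,t)$) and bookkeeping the shift by one in the Hochschild--Serre boundary map exactly as in \cite{MR}, p.~24, the pole order becomes $\rho = \sum_j (-1)^{j+1} j \cdot \rank_{\ZZ_p}\Ext^j(M,N(r))$.

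For (3), the strategy is the standard ``global duality pairs up everything'' argument of Milne--Ramachandran. First, by Theorem \ref{Mi4.9} the complex $\RR\Hom(M,N(r))$ is computed by $\RR\Gamma(\Spec(k)_{et}, \RR\mathcal Hom(M,N(r)))$; combining this with the multiplication-by-$\gamma-1$ description of $\Gamma$-cohomology one sees that the complex $E(M,N(r)) = (\Ext^\bullet(M,N(r)), d^\bullet)$ has cohomology equal to the cokernel/kernel of $\gamma - 1$ on the finite groups $D^j(\bar k)/(\text{divisible})$, together with the finite unipotent contributions $U^j(k)$; all of these are finite, and the alternating product of their orders is $\chi(M,N(r))$. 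The $p$-adic size of the leading coefficient $\lim_{t\to q^{-r}} Z(P,t)(1-q^rt)^\rho$ is then computed Newton-polygon-theoretically: its $p$-adic valuation is $-\sum_{j}(-1)^j \sum_{\alpha} \ord_p(\alpha)$ over the eigenvalues $\alpha \ne q^r$ of $F^a$ on $H^j(sP)\otimes K$, which by Corollary \ref{slopes} is reorganized along slopes as a sum over $i$ of $i$ times the $i$-th slope multiplicity, i.e. in terms of the Hodge numbers $h^{i,j}(P) = \dim_k H^j(R_1\otimes^L_R P)^i$; the precise combinatorial identity yields $\chi(P,r) = \sum_{i,j,\,i\le r}(-1)^{i+j}(r-i)h^{i,j}(P)$. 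Matching the $p$-adic valuation contribution $q^{\chi(P,r)}$ against the order $\chi(M,N(r))$ of the finite cohomology of $E$ gives the asserted formula $|\lim_{t\to q^{-r}} Z(P,t)(1-q^rt)^\rho|_p^{-1} = \chi(M,N(r)) \cdot q^{\chi(P,r)}$.

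The main obstacle is step (3): keeping the bookkeeping of signs, shifts, and the interplay between the unipotent parts $U^j$, the finite \'etale parts $D^j$, and the slope filtration consistent, so that the ``Hodge-number'' count of the $p$-adic valuation of the leading coefficient exactly cancels against the Euler characteristic of $E(M,N(r))$. This is where \cite{MR} does the real work, and our task is to check that nothing in their argument used the triviality of the coefficients — every input we invoke (Corollary \ref{slopes}, Theorems \ref{Mi4.6-8} and \ref{Mi4.9}, the degeneration of the slope spectral sequence) has already been set up for a general coherent $R$-complex, so the verification is formal but delicate.
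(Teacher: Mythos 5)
Your overall route for (1) and (2) matches the paper's sketch (Lemma \ref{P^F}, Theorem \ref{Mi4.6-8}, the exact sequence $0\to U^j\to H^j(\mathcal{P}(r)_.^F)\to D^j\to 0$, and Corollary \ref{slopes} to identify $D^j(\bar k)\otimes\QQ_p$ with the eigenvalue-$q^r$ part of the slope-$r$ piece of $H^j(sP)$), but note one misstatement already there: a connected unipotent perfect group does \emph{not} contribute nothing to $\Gamma$-cohomology. By Lang's theorem $1-\gamma$ is surjective on $U^j(\bar k)$ with kernel $U^j(k)$, a finite group of order a power of $q$; this is harmless for ranks in (1), but $[U^j(k)]$ is exactly the quantity that must be tracked in (3), so it cannot be discarded and then reinserted ad hoc.

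The genuine gap is in (3). The $p$-adic valuation of $\lim_{t\to q^{-r}}Z(P,t)(1-q^rt)^\rho$ is not the Newton-polygon quantity you assert: an eigenvalue $a$ of $F^a$ with $\ord_q(a)>r$ contributes $\ord_p(1-a/q^r)=0$, one with $\ord_q(a)<r$ contributes $\ord_p(a/q^r)$, and, crucially, an eigenvalue with $\ord_q(a)=r$ but $a\neq q^r$ contributes $\ord_p(1-a/q^r)$, which is in general strictly positive and is \emph{not} determined by slopes or Hodge numbers. These slope-$r$ contributions are precisely the finite part: by Lemma \ref{z(f)} applied to $D^j(\bar k)$ they equal $z(f''_j)$, and together with $z(f'_j)=[U^j(k)]$ they assemble into $\chi(M,N(r))=\prod_j z(f_j)^{(-1)^j}$. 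Hence one cannot first compute the valuation ``purely slope-theoretically'' as $q^{\chi(P,r)}$ and then ``match'' it against $\chi(M,N(r))$; both factors have to be extracted simultaneously from the exact evaluation of $z(f_j)$, which is the content of Proposition \ref{keyprop} in the paper (via the unipotent/\'etale d\'evissage and the degeneration of the slope spectral sequence). Finally, the conversion of the resulting slope expression $\sum_j(-1)^{j+1}\ord_q[U^j(k)]+\sum_{j,l,\,\lambda_{jl}\le r}(-1)^{j+1}(r-\lambda_{jl})$ into the Hodge-number expression $\chi(P,r)$ is not a formal reorganization via Corollary \ref{slopes}: it requires the structure theory of coherent $R$-modules (Hodge--Newton comparison together with the dominos controlling the $U^j$), which is the substantive input from \cite{MR}; the paper isolates this by reducing to the statement with $\tilde{\chi}(P,r)$ and quoting \cite{MR} for $\tilde{\chi}(P,r)=\chi(P,r)$, a step your sketch leaves unaddressed.
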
 

\subsection{Sketch of proof of the Th. \ref{MR-main}}
We will need some notation: For a group homomorphism $f$ with finite kernel and cokernel, we denote $z(f):=\frac{[\Ker(f)]}{\coker(f)]}$. If $(V,F)$ is an $F$-isocrystal on $\FF_q$, then we will denote
$$V_{(\lambda)}:=\{\bar v\in V\otimes\bar K, F\otimes\sigma(\bar v)=p^\lambda\bar v\}.$$

Note that the eigenvalues of $\gamma$ acting on  $V_{(\lambda)}$ are the $\{\frac{q^\lambda}{\alpha}\}_\alpha$, where the $\alpha$'s run over the eigenvalues of $F^a$ on $V$ satisfying $ord_q(\alpha)=\lambda$.

Let $r$ be an integer, $M,N\in D^b_c(R)$ and let $P:=\RR\underbar{Hom}(M,N)$. We denote $(a_{jl})_l$ the eigenvalues of $F^a$ acting on $H^j(sP)\otimes K$. Recall that according to a theorem of Manin, ($ord_q(a_{jl}))_l$ is the family of slopes of $H^j(sP)\otimes K$.

We will also need the following lemma of \cite{MR}:

\begin{lemma}\label{z(f)} (\cite{MR}, Lemma 5.1)
Let $M$ be a finitely generated $\ZZ_p$-module with an action of $\Gamma$ and let $f:M^\Gamma\to M_\Gamma$ be the map induced by the identity map. Then $z(f)$ is defined if and only if 1 is not a multiple root of the minimal polynomial of $\gamma$ on $M$, in which case $M^\Gamma$ has rank equal to the multiplicity of 1 as an eigenvalue of $\gamma$ acting on $M_{\QQ_p}$ and
$$z(f)=|\prod_{i,a_i\neq 1} (1-a_i)|_p.$$
\end{lemma}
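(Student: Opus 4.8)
The plan is to convert everything into the linear algebra of the single operator $u:=\gamma-1$ acting on $M$. Since $\Gamma\cong\hat{\ZZ}$ is topologically generated by $\gamma$ and has cohomological dimension $1$, one has $M^\Gamma=\ker u$ and $M_\Gamma=\coker u=M/uM$, and $f$ is the composite $\ker u\hookrightarrow M\twoheadrightarrow M/uM$, so that $\ker f=\ker u\cap uM$ and $\coker f=M/(uM+\ker u)$. The first move is to reduce to the torsion-free case by comparing $M$ with $M_{\mathrm{tf}}:=M/M_{\mathrm{tors}}$: the $u$-equivariant surjection $M\to M_{\mathrm{tf}}$ produces a commutative square with vertical arrows $f_M,f_{M_{\mathrm{tf}}}$ and horizontal arrows $\alpha\colon M^\Gamma\to M_{\mathrm{tf}}^\Gamma$, $\beta\colon M_\Gamma\to(M_{\mathrm{tf}})_\Gamma$ which become isomorphisms after $\otimes\QQ_p$, hence have finite kernel and cokernel. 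Viewing this square as a morphism of two-term complexes and comparing the two filtrations of its total complex gives the multiplicativity $z(f_M)=z(f_{M_{\mathrm{tf}}})\cdot z(\alpha)/z(\beta)$, and the six-term cohomology sequence of $0\to M_{\mathrm{tors}}\to M\to M_{\mathrm{tf}}\to0$ together with the fact that $|\ker|=|\coker|$ for any endomorphism of a finite group forces $z(\alpha)/z(\beta)=[M_{\mathrm{tors}}^\Gamma]/[(M_{\mathrm{tors}})_\Gamma]=1$; since $M_{\QQ_p}=(M_{\mathrm{tf}})_{\QQ_p}$ this legitimately reduces both the criterion and the formula to $M$ torsion-free.

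Next I would pass to $V:=M_{\QQ_p}$ and decompose $V=V_0\oplus V_1$ into the generalized kernel $V_0$ of $u$ (equivalently, the generalized $1$-eigenspace of $\gamma$) and its $u$-stable complement $V_1$, on which $u$ is invertible; thus $u(V)=u(V_0)\oplus V_1$ and $\ker(u|_V)\subseteq V_0$. The key elementary input is that a nonzero nilpotent endomorphism $w$ of a finite-dimensional vector space has $\ker w\cap\operatorname{Im}w\neq0$ (inspect a Jordan block of size $\geq2$). Applying this to $u|_{V_0}$ one gets that $\ker(f\otimes\QQ_p)=\ker(u|_V)\cap u(V)=\ker(u|_{V_0})\cap u(V_0)$ vanishes if and only if $u|_{V_0}=0$, i.e.\ if and only if $1$ is not a multiple root of the minimal polynomial of $\gamma$. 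Since $z(f)$ is defined precisely when $\ker f$ and $\coker f$ are finite, which (as both are finitely generated $\ZZ_p$-modules of equal rank by rank--nullity for $u|_V$) happens exactly when $f\otimes\QQ_p$ is an isomorphism, this establishes the equivalence; and in that case $\operatorname{rank}_{\ZZ_p}M^\Gamma=\dim_{\QQ_p}V^{\gamma=1}$ equals the multiplicity of $1$ as an eigenvalue of $\gamma$ on $M_{\QQ_p}$ (algebraic and geometric multiplicities coinciding, precisely because $u|_{V_0}=0$).

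To compute the value, assume now $M$ torsion-free and $u|_{V_0}=0$, and set $M_0:=M\cap V_0$. Then $u(M_0)\subseteq M\cap u(V_0)=0$, so $M_0\subseteq\ker u$, while conversely $\ker u\subseteq V_0$ gives $\ker u=M_0=M^\Gamma$; moreover $uM\subseteq u(V)=V_1$, so $\ker f=M_0\cap uM\subseteq V_0\cap V_1=0$ and $f$ is injective, whence $z(f)=[\coker f]^{-1}=[M:M_0\oplus uM]^{-1}$. I would then telescope this index: the isomorphism $\bar u\colon M/M_0\xrightarrow{\sim}uM$ induced by $u$ satisfies $\bar u\circ\pi=u$ for the projection $\pi\colon M\to M/M_0$, so it carries the image of $uM$ in $M/M_0$ isomorphically onto $u(uM)=u^2M$, giving $[M:M_0\oplus uM]=[uM:u^2M]$; and $u$ restricts to an injection $uM\hookrightarrow u^2M$ with kernel $\ker f=0$ which is onto, i.e.\ to the automorphism $u|_{V_1}$ of $V_1=(uM)_{\QQ_p}$ carrying the lattice $uM$ onto $u^2M$, so $[uM:u^2M]=|\det(u|_{V_1})|_p^{-1}$. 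Since the eigenvalues of $u|_{V_1}$ are the $a_i-1$ with $a_i$ ranging over the eigenvalues of $\gamma$ on $V$ different from $1$, this yields
$$z(f)=|\det(u|_{V_1})|_p=\Big|\prod_{a_i\neq1}(a_i-1)\Big|_p=\Big|\prod_{a_i\neq1}(1-a_i)\Big|_p.$$

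I expect the main obstacle to be the lattice bookkeeping in the last paragraph: one must verify that $\ker u$, $uM$ and $M_0$ sit in exactly the expected summands of $V=V_0\oplus V_1$ so that $M_0\oplus uM$ is a genuine internal direct sum and the index $[M:M_0\oplus uM]$ telescopes cleanly to $[uM:u^2M]$; this is the one point where the hypothesis ``$1$ is not a multiple root'', equivalently $u|_{V_0}=0$, is used in an essential, non-formal way. By contrast the torsion dévissage of the first paragraph and the criterion of the second are routine once the commutative-square multiplicativity of $z$ and the nilpotent-endomorphism fact are in hand.
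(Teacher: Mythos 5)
Your proof is correct: the identification $M^\Gamma=\ker(\gamma-1)$, $M_\Gamma=\coker(\gamma-1)$, the d\'evissage to the torsion-free case via multiplicativity of $z$ (with the torsion contribution cancelling because kernel and cokernel of an endomorphism of a finite group have equal order), the Fitting decomposition $V=V_0\oplus V_1$ giving the criterion and the rank statement, and the lattice-index computation $[M:M_0\oplus uM]=[uM:u^2M]=|\det(u|_{V_1})|_p^{-1}$ all check out. Note that the paper itself offers no proof of this statement — it is quoted from \cite{MR}, Lemma 5.1 — and your argument is essentially the standard one underlying that reference, so there is nothing to flag beyond the routine facts you already invoke (the two-out-of-three property of $z$ to transfer definedness across the square, and $[L:\phi L]=|\det\phi|_p^{-1}$ for a lattice $L$ and an automorphism $\phi$ with $\phi L\subset L$).
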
 

\begin{proposition}\label{keyprop} Let $r$ be an integer, $M,N\in D^b_c(R)$ and let $P:=\RR\underbar{Hom}(M,N)$. Let also $G^j:=\mathcal{E}xt^j(M,N(r))=H^j(\mathcal{P}(r)_.^F)$ (a perfect algebraic group over $Spec(k)_{et}$). We denote $U^j$ its unipotent part and $D^j$ its pro-\'etale part. Then,\\

$z(f_j:Ext^j(\bar M, \bar N(r))^\Gamma\to Ext^j(\bar M, \bar N(r))_\Gamma)$ is defined if and only if $q^r$ is not a multiple root of the minimal polynomial of $F^a$ acting on $H^j(sP)\otimes K$, in which case, 

$$z(f_j)=|\prod_{a_{jl}\neq q^r}(1-\frac{a_{jl}}{q^r})|_p.|\prod_{ord_q(a_{jl})<r} \frac{q^r}{a_{jl}}|_p. [U^j(k)].$$
\end{proposition}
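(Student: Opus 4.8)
The plan is to transcribe the argument of \cite{MR} (the computation preceding their Lemma~5.1) into the present setting, in four steps.

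\emph{Step 1: reduction to the perfect group $G^j$.} First I would base-change Theorem~\ref{Mi4.9} to $\bar k$: since $\mathrm{Spec}(\bar k)_{et}$ has trivial higher cohomology, this gives $\mathrm{Ext}^j(\bar M,\bar N(r))\simeq G^j(\bar k)$, the only point needing care being that $\RR\varprojlim_m$ commutes with $H^j$ here, which holds because the level-$m$ projective systems are Mittag--Leffler, by the structure of objects of ${\cal G}_.$. Evaluating the short exact sequence of Theorem~\ref{Mi4.6-8}(2) at $\bar k$ then yields a $\Gamma$-equivariant short exact sequence
$$0\to U^j(\bar k)\to \mathrm{Ext}^j(\bar M,\bar N(r))\to D^j(\bar k)\to 0.$$

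\emph{Step 2: $\Gamma$-cohomology and Lang's theorem.} Next I would run the long exact cohomology sequence of $\Gamma=\Gal(\bar k/k)\cong\widehat\ZZ$ attached to the sequence of Step~1. Only the degrees $0$ and $1$ contribute (the modules involved are $p$-primary, resp. finitely generated over $\ZZ_p$), and $H^1(\Gamma,U^j(\bar k))=H^1(k,U^j)=0$ by Lang's theorem, $U^j$ being connected; hence $U^j(\bar k)_\Gamma=0$. So one gets a short exact sequence $0\to U^j(k)\to\mathrm{Ext}^j(\bar M,\bar N(r))^\Gamma\xrightarrow{\pi}D^j(\bar k)^\Gamma\to 0$ and an isomorphism $\mathrm{Ext}^j(\bar M,\bar N(r))_\Gamma\xrightarrow{\sim}D^j(\bar k)_\Gamma$. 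Because $U^j(\bar k)_\Gamma=0$, the identity-induced map $f_j$ kills $U^j(k)$, so $f_j=g_j\circ\pi$, where $g_j\colon D^j(\bar k)^\Gamma\to D^j(\bar k)_\Gamma$ is the identity-induced map of the $\Gamma$-module $D^j(\bar k)$. A short diagram chase then gives an exact sequence $0\to U^j(k)\to\Ker f_j\to\Ker g_j\to 0$ and an isomorphism $\coker f_j\simeq\coker g_j$; as $U^j$ is a perfect algebraic group over a finite field killed by a power of $p$, $U^j(k)$ is finite, so $z(f_j)$ is defined if and only if $z(g_j)$ is, and then $z(f_j)=[U^j(k)]\cdot z(g_j)$.

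\emph{Step 3: computing $z(g_j)$ and identifying the eigenvalues.} By Theorem~\ref{Mi4.6-8}(3), $D^j(\bar k)$ is finitely generated over $\ZZ_p$, so Lemma~\ref{z(f)} applies to $g_j$: $z(g_j)$ is defined if and only if $1$ is not a multiple root of the minimal polynomial of $\gamma$ on $D^j(\bar k)$, in which case $z(g_j)=|\prod_{b\neq 1}(1-b)|_p$, where $b$ ranges over the eigenvalues of $\gamma$ acting on $D^j(\bar k)\otimes\QQ_p$. To identify these I would combine the $\Gamma$-isomorphism $D^j(\bar k)\otimes\QQ_p\simeq(H^r(H^j(P))\otimes\bar K)^{F\otimes\sigma}$ of Theorem~\ref{Mi4.6-8}(3) with Corollary~\ref{slopes} (which identifies the graded piece $H^r(H^j(P))\otimes K$, carrying its Hodge Frobenius, with a slope component of $H^j(sP)\otimes K$ equipped with $F^a$, the twist by $p^r$ being absorbed) and with the observation recorded in the sketch of proof (the eigenvalues of $\gamma$ on $V_{(\lambda)}$ are the $q^\lambda/\alpha$ with $\ord_q(\alpha)=\lambda$); as in \cite{MR}, this yields that the eigenvalues of $\gamma$ on $D^j(\bar k)\otimes\QQ_p$ are exactly the $q^r/a_{jl}$ with $\ord_q(a_{jl})\le r$. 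In particular $b=1$ corresponds to $a_{jl}=q^r$, and the generalized $1$-eigenspace of $\gamma$ on $D^j(\bar k)\otimes\QQ_p$ is $\Gamma$-isomorphic to the generalized $q^r$-eigenspace of $F^a$ on $H^j(sP)\otimes K$ (this eigenvalue has $\ord_q=r$, so it lies untwisted in the relevant slope component); hence the multiplicity hypothesis on $F^a$ in the statement is equivalent to the one needed to apply Lemma~\ref{z(f)}.

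\emph{Step 4: assembling the formula.} Now $z(g_j)=|\prod_{\ord_q(a_{jl})\le r,\ a_{jl}\neq q^r}(1-q^r/a_{jl})|_p$, and from $1-q^r/a_{jl}=-\,q^ra_{jl}^{-1}(1-a_{jl}/q^r)$ one gets $|1-q^r/a_{jl}|_p=|1-a_{jl}/q^r|_p\cdot|q^r/a_{jl}|_p$. Splitting the product according as $\ord_q(a_{jl})<r$ or $\ord_q(a_{jl})=r$, and using that the factors with $\ord_q(a_{jl})>r$ contribute $1$ to $|\prod(1-a_{jl}/q^r)|_p$ while those with $\ord_q(a_{jl})=r$ contribute $1$ to $|\prod q^r/a_{jl}|_p$, one obtains
$$z(g_j)=\Bigl|\prod_{a_{jl}\neq q^r}\bigl(1-\tfrac{a_{jl}}{q^r}\bigr)\Bigr|_p\cdot\Bigl|\prod_{\ord_q(a_{jl})<r}\tfrac{q^r}{a_{jl}}\Bigr|_p ,$$
and multiplying by $[U^j(k)]$ gives the proposition. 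The main obstacle is the eigenvalue identification of Step~3: it rests on carefully matching the normalisations between the $F$-gauge/Raynaud-module picture — where the relevant object is the degree-$r$ graded piece of $H^j(P)$ with its Hodge Frobenius — and the slope filtration of the simple complex $sP$ with its geometric Frobenius $F^a$; once this dictionary is set up, the remainder is a faithful transcription of \cite{MR}.
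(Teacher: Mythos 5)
Your argument follows the paper's proof essentially step for step: the same reduction via Theorem \ref{Mi4.9} and the exact sequence of Theorem \ref{Mi4.6-8}(2), the same vanishing $U^j(\bar k)_\Gamma=0$ (the paper encodes this in the diagram chase whose lower-left entry is $0$; your explicit appeal to Lang's theorem is the same point), the multiplicativity $z(f_j)=[U^j(k)]\cdot z(g_j)$ (the paper's $z(f_j)=z(f'_j)z(f''_j)$ with $z(f'_j)=[U^j(k)]$), then Lemma \ref{z(f)} applied to $D^j(\bar k)$ and the same elementary rearrangement of $p$-adic factors. Two points in Step 3 should be corrected, though neither invalidates the conclusion. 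First, the eigenvalues of $\gamma$ on $D^j(\bar k)\otimes\QQ_p$ are the $q^r/a_{jl}$ with $\ord_q(a_{jl})=r$, not $\le r$: the chain of identifications gives $D^j(\bar k)\otimes\QQ_p\simeq [H^j(sP)\otimes K]_{(r)}$, a $\QQ_p$-form of the isoclinic slope-$r$ component only, exactly as in the observation you yourself quote (the $\alpha$ there satisfy $\ord_q(\alpha)=\lambda$). Your final formula survives because the extra factors $1-q^r/a_{jl}$ with $\ord_q(a_{jl})<r$ are $p$-adic units and your Step 4 arithmetic cancels them, and the if-and-only-if condition is unaffected since $q^r$ can only be an eigenvalue on the slope-$r$ part; but as stated the claim misdescribes $D^j$ (for instance its $\ZZ_p$-rank), which does matter in the surrounding argument for Theorem \ref{MR-main}. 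Second, passing from $H^r(H^j(P))$ in Theorem \ref{Mi4.6-8}(3) to the degree-$r$ graded piece to which Corollary \ref{slopes} applies is not a matter of normalisation: it uses the degeneration of the slope spectral sequence modulo torsion (Illusie's result quoted in the paper), which the paper invokes explicitly and which you should cite rather than subsume under ``matching normalisations''.
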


\begin{proof} We have $G^j(\bar \FF_q)=H^j(\bar{\mathcal{P}}(r)_.^F)(\bar \FF_q)=Ext^j(\bar M,\bar{N}(r))$ by Theorem \ref{Mi4.9}. On the other hand, applying the Theorem \ref{Mi4.6-8}, with $M=P(r)$, we deduce that $D^j(\bar \FF_q)$ is a finitely generated $\ZZ_p$-module and that 

$$D^j(\bar \FF_q)\otimes \QQ_p\simeq [H^0(H^j(P(r))\otimes\bar K]^{F\otimes\sigma}.$$

Since the slope spectral sequence degenerates at $E_2$ after killing the torsion, we can rewrite the right hand side of the previous isomorphism as

$$[H^j(P(r))^0\otimes K]_{(0)}\simeq [H^j(sP(r)\otimes K)_{[0,1[}]_{(0)},$$
by Corollary \ref{slopes}.

Finally, we have,
$$[H^j(sP(r)\otimes K)_{[0,1[}]_{(0)}\simeq [H^j(sP(r)\otimes K)]_{(0)}\simeq [H^j(sP\otimes K)]_{(r)}.$$

In particular, the eigenvalues of $\gamma$ acting on $D^j(\bar k)\otimes\QQ_p$ are the $\frac{q^r}{a_{jl}}$, with $ord_q(a_{jl})=r$.

Looking at the action of $1-\gamma$ on each groups $U^j(\bar k)$, $G^j(\bar k)$ and $D^j(\bar k)$, we deduce by diagram chase, the following commutative diagram of short exact sequences\\

\centerline{
$\begin{matrix}
0&\to&U^j(\bar \FF_q)^\Gamma&\to&G^j(\bar \FF_q)^\Gamma&\to&D^j(\bar \FF_q)^\Gamma&\to&0\\
&&\downarrow f'_j&&\downarrow f_j&&\downarrow f"_j&&\\
0&\to&0&\to&G^j(\bar \FF_q)_\Gamma&\to&D^j(\bar \FF_q)_\Gamma&\to&0
\end{matrix}$}

Using the reasoning above as well as the lemma \ref{z(f)}, we deduce that $z(f"_j)$ is defined if and only if $q^r$ is not a multiple root of the minimum polynomial of $F^a$ acting on $H^j(sP)\otimes K$ and 
$$z(f"_j):=|\prod_{ord_q(a_{jl})=r, a_{jl}\neq q^r}(1-\frac{q^r}{a_{jl}})|_p.$$

Now, we just use that $z(f_j)=z(f'_j)z(f"_j)$ to conclude.
\end{proof}

The proof of Th. \ref{MR-main} is now reduced to the following:

\begin{theorem} Let $M,N\in D^b_c(R)$, $r$ an integer and denote as usual $P=\RR\underbar{Hom}(M,N(r))$ and for any $j$ denote $U^j$, the unipotent part of $H^j(\mathcal{P}(r)_.^F)$. Assume that $q^r$ is not a multiple root of the minimum polynomial of $F^a$ acting on $H^j(sP)\otimes K$ for any $j$. 

\begin{enumerate}
\item The groups $Ext^j(M,N(r))$ are finitely generated $\ZZ_p$-modules and the alternating sum of their rank is zero. 

\item The Zeta function $Z(P,t)$ of $P$ has a pole at $t=q^{-r}$ of order 
$$\rho:=\sum_j (-1)^{j+1}j.rank_{\ZZ_p}(Ext^j(M,N(r)).$$

\item The cohomology groups of the complex $E(M,N(r))$ are finite and the alternating product of their orders $\chi(M,N(r))$ satisfies
$$| \lim_{t \to q^{-r}} Z(P,t) (1-q^rt)^\rho|_p^{-1}=\chi(M,N(r)).q^{\tilde{\chi}(P,r)}$$
where 
$$\tilde{\chi}(P,r)=\sum_j (-1)^{j+1}ord_q([U^j(k)])+\sum_{j,l, \lambda_{jl}\leq r} (-1)^{j+1}(r-\lambda_{jl}),$$
with $\lambda_{jl}$ the slopes of $H^j(sP\otimes K)$.
\end{enumerate}
\end{theorem}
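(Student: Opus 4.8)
The plan is to reduce this theorem to Proposition \ref{keyprop} by assembling the local contributions $z(f_j)$ into a global Euler characteristic, following the strategy of \cite{MR}, p.25--28. First I would set up the master identity: the alternating product $\prod_j z(f_j)^{(-1)^j}$ (or rather a signed version involving $(-1)^{j+1}j$) should on the one hand compute, via the complex $E(M,N(r))$ whose differential $d^j$ factors through $Ext^j(\bar M,\bar N(r))^\Gamma \to Ext^j(\bar M,\bar N(r))_\Gamma$, the alternating product of the orders of the cohomology of $E(M,N(r))$ — this is where $\chi(M,N(r))$ comes from and where finiteness of those cohomology groups must be established. On the other hand, the same alternating product of the $z(f_j)$ should, by the explicit formula in Proposition \ref{keyprop}, produce the $p$-adic absolute value of the leading term of $Z(P,t)$ at $t=q^{-r}$ together with the $q$-power correction factor $q^{\tilde\chi(P,r)}$.

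The key steps, in order, are: (1) Prove (1): finite generation of $Ext^j(M,N(r))$ over $\ZZ_p$ follows from $.^F:D^b_c(R)\to D^{perf}(\ZZ_p)$ and Lemma \ref{P^F}; the vanishing of the alternating sum of ranks follows because $\Phi$-equivariantly the ranks are governed by the multiplicities of $q^r$ as an eigenvalue of $F^a$ on $H^j(sP)\otimes K$ and the alternating sum telescopes through the slope spectral sequence (the Euler characteristic of a perfect complex of $\ZZ_p$-modules with an automorphism has rank matching generalized eigenspaces in the right way). (2) Prove (2): by the factorization of $Z(P,t)$ into $\prod_j \det(1-tF^a,H^j(sP)\otimes K)^{(-1)^{j+1}}$, the pole order at $t=q^{-r}$ is $\sum_j(-1)^{j+1}(\text{mult. of }q^r\text{ as eigenvalue of }F^a\text{ on }H^j(sP)\otimes K)$, and by Proposition \ref{keyprop} combined with Lemma \ref{z(f)} this multiplicity equals $\rank_{\ZZ_p} Ext^j(\bar M,\bar N(r))^\Gamma = \rank_{\ZZ_p} Ext^j(M,N(r))$ (using the Hochschild--Serre sequence and the finiteness from step (1)); but one must be careful that the stated $\rho$ has the extra factor of $j$, so the identification is really that $Ext^j(M,N(r))$ has the same rank as the contribution with the combinatorial weight, and one sums. (3) Prove (3): take $p$-adic absolute values in the leading-coefficient identity. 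The factor $\prod_j |\prod_{a_{jl}\neq q^r}(1-a_{jl}/q^r)|_p^{(-1)^{j+1}}$ is exactly $|\lim_{t\to q^{-r}} Z(P,t)(1-q^rt)^\rho|_p^{-1}$ up to a $q$-power; collecting the $z(f_j)$ via the diagram in Proposition \ref{keyprop} gives $[U^j(k)]$ and the factors $|\prod_{ord_q(a_{jl})<r} q^r/a_{jl}|_p$, whose $p$-adic valuations assemble into $\sum_j(-1)^{j+1}ord_q([U^j(k)])$ and $\sum_{j,l,\lambda_{jl}\leq r}(-1)^{j+1}(r-\lambda_{jl})$ respectively — noting that $ord_q$ of the leading-coefficient unit piece together with the slope bookkeeping produces precisely $\tilde\chi(P,r)$. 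The equality $\chi(M,N(r)) = \prod_j z(f_j)^{(-1)^{?}}$ on the algebraic side comes from analyzing the complex $E(M,N(r))$: its cohomology at degree $j$ sits in an exact sequence relating $\Ker d^j$ and $\coker d^{j-1}$, and the Euler characteristic formula for such "complexes built from $(M^\Gamma\to M_\Gamma)$ maps" is Lemma 5.2-type bookkeeping in \cite{MR}.

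I expect the main obstacle to be step (3), specifically the correct tracking of the $q$-power (equivalently $\ZZ_p$-length) corrections. The subtle point is that Proposition \ref{keyprop} only isolates, through $z(f''_j)$, the slope-exactly-$r$ part, and bundles everything else (slopes $<r$, plus the unipotent group orders $[U^j(k)]$) into $z(f'_j)$; to get the clean formula $\tilde\chi(P,r)=\sum_j(-1)^{j+1}ord_q([U^j(k)]) + \sum_{j,l,\lambda_{jl}\leq r}(-1)^{j+1}(r-\lambda_{jl})$ one must separately argue that the contribution of slopes $\lambda_{jl}<r$ to the global $p$-adic valuation is exactly $(r-\lambda_{jl})$ with the right sign, which requires knowing that for each such slope the relevant map $Ext^j(\bar M,\bar N(r))^\Gamma\to Ext^j(\bar M,\bar N(r))_\Gamma$ restricted to that slope piece is injective with cokernel of length $(r-\lambda_{jl})\cdot(\text{mult})$ — this uses that $1-\gamma$ on a slope-$\lambda$ piece acts with determinant of valuation $(r-\lambda)$ (from $ord_q(1-q^\lambda/\alpha) = \lambda - ord_q(\alpha)$-type estimates for $\lambda < r$). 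Making this precise, and checking it dovetails with the degeneration of the slope spectral sequence modulo finite-length modules (so that $H^j(sP)$ and $\bigoplus_i H^j(P)^i$ have the same slopes with multiplicity), is the delicate accounting that the rest of the argument hinges on. The finiteness of the cohomology of $E(M,N(r))$ — needed for $\chi(M,N(r))$ to make sense — should follow formally once $q^r$ is not a multiple root, by the $z(f_j)$ being defined for all $j$ (Proposition \ref{keyprop}) and a standard snake-lemma argument on the Hochschild--Serre sequence.
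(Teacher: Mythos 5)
Your proposal follows essentially the same route as the paper's own (quite compressed) proof: both reduce everything to Proposition \ref{keyprop} and the Milne--Ramachandran formalism, identify $\chi(M,N(r))$ with the alternating product of the $z(f_j)$ coming from the complex $E(M,N(r))$, and then read off the three factors of $z(f_j)$ as the $p$-adic leading coefficient, the slope-$<r$ contribution $\sum(r-\lambda_{jl})$ (via Manin's theorem), and the unipotent term $ord_q([U^j(k)])$, with assertions (1) and (2) handled by the Hochschild--Serre/rank bookkeeping from \cite{MR}, p.~25. The subtleties you flag (the weight $j$ in $\rho$ versus $\operatorname{rank}\operatorname{Ext}^j(M,N(r))=\rho_{j-1}+\rho_j$, and the slope accounting through the degeneration of the slope spectral sequence) are exactly the points the paper delegates to \cite{MR}, so there is no substantive divergence.
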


\begin{proof} The first two assertions are proved in \cite{MR}, p.25. We have 
$$Z(P,t)=\prod_{j}(\prod_l(1-a_{jl}))^{(-1)^{j+1}}$$
and we can show exactly as in \cite{MR} that 
$$\chi(M,N(r))=\prod_j z(f_j)^{(-1)^j}.$$

Note also that 
$$|\prod_{a_{jl}\neq q^r}(1-\frac{a_{jl}}{q^r})|_p=|\lim_{t \to q^{-r}} \frac{P_j(t)}{(1-q^rt)^{\rho_j}}|_p,$$

where $\rho_j$ is the multiplicity of $q^r$ as an inverse root of $P_j(t)$ and is equal to $rank Ext^j(\bar M,\bar N(r))^\Gamma$ (or thanks to our hypothesis on $r$ to $rank Ext^j(\bar M,\bar N(r))_\Gamma$)
and that by Manin's Theorem:

$$|\prod_{ord_q(a_{jl})<r} \frac{q^r}{a_{jl}}|_p=\sum_{l, \lambda_{jl}<r} r-\lambda_{jl}.$$

The result follows immediately by Proposition \ref{keyprop}.
\end{proof}

\end{section}

\begin{section}{Application to special values of geometric $L$-function}

We come back to the setting of the first chapter: Let $(V_.,V'_.)$ be the proper hypercovering of $(V,V')$.

\begin{definition} Let $K$ be a finite extension of $\QQ$ and let $S$ be a set of pairs $(\ell,\iota)$ of a prime number $\ell$ and an embedding $\iota : K \to \QQbar_\ell$. A strict compatible $(K,S)$-family of $\ell$-adic sheaves ($\ell$ any prime) corresponds to the data $E=(E_{\ell,\iota})_{(\ell,\iota) \in S}$ of smooth $\QQbar_\ell$-sheaves $E_{\ell,\iota}$ on $V$ ($\ell\neq p$) and of an overconvergent $F$-isocrystal $(E_{p,\iota},\varphi_p)$ over $V/\QQbar_p$ such that for any $x\in V$, $\det(1-\Frob_x q^{-s \cdot \de(x)}; F_{\ell,\iota,x}) = \iota(P_x)$ (where $F_{\ell,\iota,x}$ is defined as above) for some $P_x \in K[q^{-s}]$ which is independent of $(\ell,\iota) \in S$. We will call a strict compatible system nice if the $l$-adic sheaves are $\iota$-pure and coming from $\ZZ_l$-smooth sheaves, and $E_p$ is nice in the previous sense.    
\end{definition}

We give an example of existence of strict compatible system of $\ell$-adic sheaves.

\begin{theorem} Assume that $V$ is smooth and geometrically connected over $\FF_q$. Let $E_p$ be an irreducible overconvergent $F$-isocrystal over $V$ with finite determinant. Then $E_p$ is the p-adic coefficient of a strict compatible system. In particular, 
$$L(E_p,t)\in \Omega(t),$$
for some finite extension $\Omega/\QQ$.
\end{theorem}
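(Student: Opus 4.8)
The plan is to reduce the assertion to the case where $V$ is a smooth geometrically connected \emph{curve}, and then to invoke the $p$-adic Langlands correspondence of Abe together with L.~Lafforgue's $\ell$-adic Langlands correspondence for $\GL_d$ over function fields. First I would recall that producing a strict compatible $(K,S)$-family with $p$-adic member $E_p$ amounts to producing, for every prime $\ell\neq p$ and every embedding $\iota$, a smooth $\QQbar_\ell$-sheaf $E_{\ell,\iota}$ on $V$ whose local Frobenius characteristic polynomial at each closed point $x$ coincides, after $\iota$, with that of $E_p$ at $x$. The existence of such $\ell$-adic \emph{companions} on an arbitrary smooth variety is precisely the content of the companions theorem invoked in \cite{AE} (which reduces, via the work of Drinfeld on de Jong's conjecture and of Deligne and Esnault--Kerz on companions, to the case of curves). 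Since $E_p$ is irreducible with finite determinant, the theory of weights for overconvergent $F$-isocrystals shows that $E_p$ is $\iota$-pure of weight $0$ and integral in the sense of Section \ref{coeff}; the companions produced are then $\iota$-pure of weight $0$ and $\ZZ_\ell$-integral as well, so the system is automatically nice. Thus the only genuinely new point is the curve case, and even there it is a matter of quoting the relevant correspondences.

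On a smooth geometrically connected curve $C/\FF_q$ with function field $F$, Abe's correspondence attaches to the irreducible rank-$d$ overconvergent $F$-isocrystal $E_p$ of finite determinant a cuspidal automorphic representation $\pi$ of $\GL_d(\AA_F)$ with finite-order central character, matching the Satake parameters of $\pi$ at every closed point of $C$ with the Frobenius eigenvalues of $E_p$. Plugging $\pi$ into L.~Lafforgue's correspondence yields, for each $\ell\neq p$, an irreducible lisse $\QQbar_\ell$-sheaf $E_\ell$ on $C$ with the same local factors; moreover the field $\Omega$ generated over $\QQ$ by the Hecke eigenvalues of $\pi$ is a number field, and the common local polynomials $P_x$ lie in $\Omega[q^{-s}]$. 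This gives a strict compatible system over $C$ whose $p$-adic member is $E_p$, and by the reduction above one obtains the corresponding system over $V$. The integrality of the $E_\ell$, i.e. that they descend to $\ZZ_\ell$-smooth sheaves, follows from the fact that the Frobenius eigenvalues are $q$-Weil numbers and hence $\ell$-adic units; the $\iota$-purity follows from that of $E_p$ through the companion relation.

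For the final $L$-function statement, I would combine the Euler product $L(V,E_p,t)=\prod_{v}\det(1-tF_v,E_{p,v})^{-1}$ with the identity of local factors across the compatible system: for any $\ell\neq p$ one has $L(V,E_p,t)=L(V,E_{\ell,\iota},t)$ as formal power series, and the right-hand side is, by Grothendieck's rationality theorem (equivalently, by the cohomological expression recalled from \cite{ELS}), a rational function whose coefficients are symmetric functions of Frobenius eigenvalues and therefore lie in the number field $\Omega$. Hence $L(E_p,t)\in\Omega(t)$ with $\Omega/\QQ$ finite.

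The main obstacle is the passage from curves to a general smooth $V$: constructing an honest \emph{lisse} $\ell$-adic sheaf on all of $V$ (rather than merely a collection of companions along curves) and checking strict compatibility there is the technical heart, and it is exactly this step that needs the full strength of \cite{AE}. By comparison the curve case, although it rests on the deep theorems of L.~Lafforgue and Abe, is essentially a citation. A secondary subtlety is the bookkeeping of twists so that ``finite determinant'' corresponds uniformly, for all $\iota$ and both at $p$ and at $\ell\neq p$, to weight $0$ and integrality --- this is what makes the constructed system \emph{nice}, and is needed before the results of the earlier sections can be applied to it.
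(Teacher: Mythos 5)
Your argument is essentially the paper's: the existence of the $\ell$-adic companions is exactly what is quoted from \cite{AE}, Theorem 4.2 (whose proof indeed proceeds by reduction to curves and the correspondences of Abe and L.~Lafforgue), and the fact that the common local polynomials generate a finite extension $\Omega/\QQ$, whence $L(E_p,t)\in\Omega(t)$, is Deligne's finiteness theorem as cited via \cite{E-K}. The additional paragraph claiming the resulting system is \emph{nice} goes beyond the statement and is not fully justified as written --- being a $q$-Weil number does not by itself imply $\ell$-adic integrality (that is a separate theorem of Lafforgue/Drinfeld type), and integrality in the crystalline sense of Section \ref{coeff} does not follow from irreducibility with finite determinant --- but since niceness is not asserted in the theorem this does not affect the proof.
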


\begin{proof} By \cite{AE}, Theorem 4.2, we can associate for any prime $\ell$, an $\ell$-adic sheaf with same $L$-function. The fact that this $L$-function lies in $\Omega(t)$ for some finite extension $\Omega$ of $\QQ$ has been proved by Deligne (see \cite{E-K}).
\end{proof}

Until the end, we assume given a strict compatible system $E$ which is {\bf nice} (in the sense of Section \ref{coeff}) and whose $\ell$-adic sheaves (any $\ell$) are $\iota_\ell$-pure. we will consider an integer $r$ such that $q^r$ is not a multiple root of the minimum polynomial of $\varphi_p^a$ acting on $H^j_{rig,c}(V,E_p)$ for any $j$ nor of the minimum polynomial of $\varphi_\ell$ acting on $H^j_{et,c}(\bar V, E_\ell)$.

Note that by our hypothesis we have 

$$L(V,E_\ell,t)=L(V,E_p,t),$$

for any prime $\ell$. We denote this function $L(V,E,t)$.

\subsection{The $p$-adic valuation of the leading coefficient}

Let $(E^{log}_.)$ some locally free finite rank objects of $F-Cryst((V'_.)^\sharp/W)$ associated to the {\bf nice} $E_p$ . We will denote all this data $\underbar{E}_p$.

For any $j$, we consider ${\bf R}(E^{log}_j)\in D^b_c(R)$, the object defined in Section \ref{Dbc}. As $D^b_c(R)$ is a triangulated subcategory of $D(R)$, we deduce that for any integer $d$,
$${\bf R}(E^{log}_{|V_{\leq d}})\in D^b_c(R)$$
where for any simplicial scheme $S.$, $S_{\leq d}$ denote the truncation on the right at order $d$ of $S_.$.
We denote
$$M(\underbar{E}_p,d):=\tau_{\leq N(E)}{\bf R}(E^{log}_{|V_{\leq d}})$$ the object of $D^b_c(R)$, where $\tau_{\leq *}$ is the $*$-th truncation functor in the derived category. 

\begin{lemma} For $d$ large enough, we have
$$H^i(sM(\underbar{E}_p,d))\otimes\QQ_p\simeq H^i_{rig}(V,E_p)$$
\end{lemma}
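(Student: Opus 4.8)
The plan is to construct a cohomological descent spectral sequence computing $H^i(sM(\underbar{E}_p,d))\otimes\QQ_p$ and to match it, for $d$ large, with the proper cohomological descent spectral sequence for the rigid cohomology of $E_p$. Since $V_{\le d}$ is a \emph{finite} simplicial scheme, ${\bf R}(E^{log}_{|V_{\le d}})$ is obtained inside the triangulated category $D^b_c(R)$ from the objects ${\bf R}(E^{log}_s)$ ($0\le s\le d$) by iterated mapping cones along the cosimplicial structure; since $s:D^b_c(R)\to D(W)$ is triangulated, the same iterated-cone description holds for the perfect $W$-complex $s{\bf R}(E^{log}_{|V_{\le d}})$. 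The skeletal (simplicial-degree) filtration then gives a spectral sequence
$$E_1^{s,t}=H^t\bigl(s{\bf R}(E^{log}_s)\bigr)\Longrightarrow H^{s+t}\bigl(s{\bf R}(E^{log}_{|V_{\le d}})\bigr),\qquad 0\le s\le d,$$
with $d_1$ the alternating sum of the coface maps, compatible with the Frobenius operators.

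Next I would identify the $E_1$-terms. The corollary of Section 4 identifies $H^t(s{\bf R}(E^{log}_s))$ with the log-crystalline cohomology $H^t_{crys}((V'_s)^{\sharp}/W,E^{log}_s)$, Frobenius-compatibly. Tensoring with $\QQ_p$ and using the comparison between the log-crystalline cohomology of the proper log-smooth $(V'_s)^{\sharp}$ with coefficients in the (integral) log-$F$-isocrystal $E^{log}_s$ and the rigid cohomology of the open part $V_s$ with coefficients in $j_s^\dagger E^{log}_s=f_{V_s}^*E_p$ (here one uses the integrality of the $E^{log}_\bullet$, the log-smoothness and properness of $(V'_s)^{\sharp}$, and the construction of $j_\bullet^\dagger$ in \cite{LST}), one obtains Frobenius-compatible isomorphisms $E_1^{s,t}\otimes\QQ_p\simeq H^t_{rig}(V_s/K,f_{V_s}^*E_p)$ for $0\le s\le d$. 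On the other hand, proper cohomological descent for the rigid cohomology of overconvergent $F$-isocrystals (the tool already invoked in the Introduction) gives a convergent spectral sequence $H^t_{rig}(V_s/K,f_{V_s}^*E_p)\Rightarrow H^{s+t}_{rig}(V/K,E_p)$, whose $E_1$-page, restricted to the columns $s\le d$, is canonically identified with the one above, compatibly with all differentials and with Frobenius.

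To finish I would compare the two by a boundedness argument. Both $E_1$-pages vanish unless $0\le t\le 2n$ (finite cohomological dimension of the rigid cohomology of the smooth $V_s$, of dimension $\le n$), and both abutments are supported in a range of total degrees independent of $d$: on the rigid side by the bound $N(E)$ of \textbf{(NICE)}, and on the other side because $s{\bf R}(E^{log}_{|V_{\le d}})$ is a perfect $W$-complex whose cohomology is supported in a range controlled by $N(E)$, $n$ and the weight of $E_p$ only (via Corollary \ref{slopes} and the bounded weights of the $E^{log}_s$). Hence, for a fixed $i$, only the columns $s\le i+1$ contribute to $H^i$ of either abutment, so for $d$ large enough the truncated descent spectral sequence agrees with the full one in the relevant range and $H^i(s{\bf R}(E^{log}_{|V_{\le d}}))\otimes\QQ_p\simeq H^i_{rig}(V/K,E_p)$. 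It remains to pass to $M(\underbar{E}_p,d)=\tau_{\le N(E)}{\bf R}(E^{log}_{|V_{\le d}})$: writing $X={\bf R}(E^{log}_{|V_{\le d}})$ and using the triangle $\tau_{\le N(E)}X\to X\to\tau_{>N(E)}X$ together with the slope spectral sequence and the non-negativity of the internal grading, $s\tau_{>N(E)}X$ has cohomology only in degrees $>N(E)$, whence $H^i(sM(\underbar{E}_p,d))\otimes\QQ_p=H^i(sX)\otimes\QQ_p\simeq H^i_{rig}(V,E_p)$ for $i\le N(E)$; for $i>N(E)$ both sides vanish, $H^i_{rig}(V,E_p)=0$ by \textbf{(NICE)} and $H^i(sM(\underbar{E}_p,d))\otimes\QQ_p$ by Corollary \ref{slopes}, being a sum of slope pieces of the vanishing groups $H^i_{rig}(V,E_p)$.

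The step I expect to be the main obstacle is the comparison isomorphism $H^t_{crys}((V'_s)^{\sharp}/W,E^{log}_s)\otimes\QQ_p\simeq H^t_{rig}(V_s/K,f_{V_s}^*E_p)$ — identifying the log-crystalline cohomology of the log-smooth compactification with the rigid cohomology of the open variety for the correct overconvergent coefficient — together with the verification that it is Frobenius-equivariant and functorial in the simplicial index $s$, so that it genuinely matches the two descent spectral sequences. The boundedness bookkeeping making "$d$ large enough" legitimate (ruling out $d$-dependent spurious cohomology) is routine but must be carried out with some care.
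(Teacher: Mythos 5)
Your route is essentially the paper's own: the proof in the text is three lines, observing that both sides compute $\RR\Gamma_{rig}(V,E_p)$ (cut off at $N(E)$), using proper cohomological descent for rigid cohomology with coefficients together with the comparison between rigid and log-crystalline cohomology (\cite{Tr}, Prop.~1.5) on the full simplicial side, and the second condition of \textbf{(NICE)} (finitely many simplicial levels suffice) on the truncated side. Your expansion of this into a comparison of the skeletal spectral sequence for $s{\bf R}(E^{log}_{|V_{\le d}})$ with the rigid descent spectral sequence, with the column bound $s\le i+1$ for a fixed total degree, is a faithful and correct elaboration of that argument for each fixed $i$ once $d\ge i+2$.

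However, one sub-claim is wrong as stated, and it leaves a gap if (as the later application to Theorem \ref{MR-main} requires) the isomorphism is to hold in all degrees for a single choice of $d$. You assert that $s{\bf R}(E^{log}_{|V_{\le d}})$ has cohomology supported in a range of degrees independent of $d$, ``controlled by $N(E)$, $n$ and the weight of $E_p$''. This is not true: a truncated proper hypercovering produces \v{C}ech-type classes in crystalline cohomology in degrees up to roughly $d+2n$, and purity or weights play no role in this. Consequently your final vanishing argument for $i>N(E)$ --- that $H^i(sM(\underbar{E}_p,d))\otimes\QQ_p$ is a sum of slope pieces of the vanishing groups $H^i_{rig}(V,E_p)$ --- is legitimate only for $i\le d-1$, where the descent identification applies; for $i\ge d$ the group $H^i(s{\bf R}(E^{log}_{|V_{\le d}}))\otimes\QQ_p$ is truncation junk and is not a piece of rigid cohomology. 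What actually saves the statement there is a slope (not weight) bound: by Corollary \ref{slopes} and the degeneration of the slope spectral sequence, the truncation $\tau_{\le N(E)}$ retains in $H^i(sM(\underbar{E}_p,d))\otimes\QQ_p$ only the slope-$\ge i-N(E)$ part of $H^i(s{\bf R}(E^{log}_{|V_{\le d}}))\otimes\QQ_p$, while the slopes occurring in $H^t_{crys}((V'_s)^{\sharp}/W,E^{log}_s)\otimes\QQ_p$ are bounded above uniformly in $s$ (the dimensions of the $V'_s$ and the levels of the crystals $E^{log}_s$ being bounded), so these pieces vanish once $d$ exceeds $N(E)$ plus that bound. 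To be fair, the paper's own proof is silent on exactly this point; but your justification of it, as written, would fail and should be replaced by the slope argument.
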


\begin{proof}
The inclusion $V'_{\leq d}\to V'_.$, induce an isomorphism  
$$\tau_{\leq N(E)}H^i_{crys}(V'^{\#}_./W,E^{log}_.)\otimes \QQ_p\simeq \tau_{\leq N(E)}H^i_{crys}(V'^{\#}_{\leq d}/W,E^{log}_{\leq d})\otimes \QQ_p=H^i(sM(\underbar{E}_p,d))\otimes\QQ_p$$
for $d$ large enough. Indeed, observe that both sides coincide with $\RR\Gamma_{rig}(V,E_p)$. For the left-hand side, this is a consequence of the proper descent of rigid cohomology with coefficients (observe that the rigid cohomology of $E_p$ over $V$ vanishes for $i$ greater than $N(E)$) and the comparison between rigid and log-crystalline cohomology (\cite{Tr}, Prop. 1.5). For the right-hand side, note that by the second hypothesis of {\bf nice}, a finite number of $V_i$'s suffices to describe the rigid cohomology of $E_p$. 
\end{proof}

We set 
$$M_c(\underbar{E}_p,d):=Cone(M(\underbar{E}_p,d)
\buildrel{i_.^*}\over\to M(\underbar{E}_{p,|Z},d))[1].$$

We have $M_c(\underbar{E}_p,d)\in D^b_c(R)$ and we can show exactly as in \cite{MR}, Lem. 6.11

\begin{lemma}
For $d$ large enough, we have 
$$H^i(sM_c(\underbar{E}_p,d))\otimes\QQ_p\simeq H^i_{rig,c}(V,E_p).$$
\end{lemma}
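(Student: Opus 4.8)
The plan is to deduce the compactly supported statement from the ordinary one (the previous Lemma) together with the definition of $M_c$ as a cone and the standard excision triangle computing rigid cohomology with compact support. First I would recall that by construction $M_c(\underbar{E}_p,d) = \mathrm{Cone}\bigl(M(\underbar{E}_p,d)\buildrel{i_.^*}\over\to M(\underbar{E}_{p,|Z},d)\bigr)[1]$, where $i_.$ is the closed immersion of the simplicial divisor $Z_.$ (a proper hypercovering of $Z$). Since the functor $s$ is exact and sends distinguished triangles in $D^b_c(R)$ to distinguished triangles in $D(W)$ (it is a triangulated functor, as noted in the subsection on the simple complex), applying $s$ and then $-\otimes\QQ_p$ yields a distinguished triangle
$$sM(\underbar{E}_p,d)\otimes\QQ_p \to sM(\underbar{E}_{p,|Z},d)\otimes\QQ_p \to sM_c(\underbar{E}_p,d)\otimes\QQ_p \to {+1}.$$

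Next I would invoke the previous Lemma twice: once for $V$ itself, giving $H^i(sM(\underbar{E}_p,d))\otimes\QQ_p\simeq H^i_{rig}(V,E_p)$ for $d$ large, and once for the closed piece $Z$ with its restricted coefficient, using that $Z_.\to Z$ is a proper hypercovering and $\underbar{E}_{p,|Z}$ comes from the restricted log $F$-crystals; this gives $H^i(sM(\underbar{E}_{p,|Z},d))\otimes\QQ_p\simeq H^i_{rig}(Z,E_{p,|Z})$ for $d$ large. (One must check that the proof of the previous Lemma applies verbatim to the pair $(Z,Z)$ — i.e. that the relevant vanishing $H^i_{rig}(Z,E_{p,|Z})=0$ for $i$ large and the proper descent and rigid–log-crystalline comparison hold; these are the same inputs as before, since $E_p$ nice restricts to something with finite cohomological dimension on $Z$, and $\dim Z < n$.) I would also need to check that the map induced on cohomology by $s(i_.^*)\otimes\QQ_p$ is, under these identifications, the restriction map $H^i_{rig}(V,E_p)\to H^i_{rig}(Z,E_{p,|Z})$ — this is the compatibility of the comparison isomorphisms with pullback along $i_.$, which follows from functoriality of the constructions in Section~\ref{Dbc} and of proper descent.

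Then I would pass to the long exact cohomology sequence of the triangle above and compare it with the excision long exact sequence
$$\cdots \to H^i_{rig,c}(V,E_p)\to H^i_{rig}(V'^\sharp\text{-completion},\ldots)\to H^i_{rig}(Z,E_{p,|Z})\to H^{i+1}_{rig,c}(V,E_p)\to\cdots,$$
more precisely the triangle $\RR\Gamma_{rig,c}(V,E_p)\to \RR\Gamma_{rig}(\overline V,\ldots)\to \RR\Gamma_{rig}(Z,E_{p,|Z})\xrightarrow{+1}$; here, because we have arranged (via the hypercovering and the truncation at $N(E)$) that $\RR\Gamma_{rig}(V,E_p)$ already computes the ``interior'' cohomology in the range that matters, the first two terms agree with $sM(\underbar{E}_p,d)$ and $sM(\underbar{E}_{p,|Z},d)$ after $\otimes\QQ_p$. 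A five-lemma / comparison-of-triangles argument, using that the two triangles are connected by the comparison isomorphisms which commute with the connecting maps, then gives the desired isomorphism $H^i(sM_c(\underbar{E}_p,d))\otimes\QQ_p\simeq H^i_{rig,c}(V,E_p)$ for $d$ large enough.

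The main obstacle I anticipate is not the diagram chase but checking that the connecting/restriction map $s(i_.^*)$ really matches the restriction map in the rigid excision triangle, compatibly with all the comparison isomorphisms — i.e. the full commutativity of the ladder of the two long exact sequences — and, relatedly, making sure that ``$d$ large enough'' can be chosen uniformly so that the previous Lemma applies simultaneously to $V$ and to $Z$ and that the cone is formed before any truncation spoils the identification; this is exactly the kind of bookkeeping that \cite{MR}, Lem.~6.11 handles in the constant-coefficient case, so the claim is that their argument carries over once the coefficient versions of proper descent of rigid cohomology and of the rigid–log-crystalline comparison (\cite{Tr}, Prop.~1.5) are in place, which they are by hypothesis {\bf (NICE)}.
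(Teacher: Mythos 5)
Your overall strategy (apply $s$ to the defining triangle of $M_c(\underbar{E}_p,d)$, identify the outer terms, and compare with a triangle computing $H^i_{rig,c}$) has the right shape, and the paper itself gives no more than a pointer to \cite{MR}, Lem.~6.11; but the way you identify the terms contains a genuine error. First, $E_p$ is an overconvergent $F$-isocrystal on $V$ and $Z=V'\setminus V$ is disjoint from $V$, so ``$E_{p|Z}$'' does not exist as a coefficient on $Z$: the notation $\underbar{E}_{p,|Z}$ can only mean the restriction of the log $F$-crystals $E^{log}_.$ to the simplicial divisor $Z_.$ equipped with the log structure induced from $V'^{\sharp}_.$. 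The crystalline cohomology of that restricted object is a boundary (tubular) term, not $H^i_{rig}(Z,\cdot)$, so the previous Lemma does not apply ``verbatim to the pair $(Z,Z)$''. Second, the excision triangle you invoke is not available here: by the previous Lemma the source of the map $i_.^*$ computes $H^i_{rig}(V,E_p)$ (the open variety), not the cohomology of the compactification $\overline V$, on which $E_p$ does not even live; and a triangle of the shape $\RR\Gamma_{rig,c}(V,E_p)\to\RR\Gamma_{rig}(V,E_p)\to\RR\Gamma_{rig}(Z,\cdot)$ is simply false: already for $V=\AA^1\subset\PP^1=V'$, $Z$ a point and trivial coefficient, the segment $H^1(Z)\to H^2_{rig,c}(V)\to H^2_{rig}(V)$ would force $H^2_{rig,c}(\AA^1)=0$. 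Your sentence matching ``the first two terms'' of your excision triangle with $sM(\underbar{E}_p,d)$ and $sM(\underbar{E}_{p,|Z},d)$ is moreover inconsistent with your own identifications, since the first term is the compactly supported cohomology one is trying to compute.

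What the coefficient version of \cite{MR}, Lem.~6.11 actually requires is: on each $V'^{\sharp}_m$ the short exact sequence of log-crystalline complexes $0\to E^{log}_m(-Z_m)\to E^{log}_m\to E^{log}_m|_{Z_m^{\sharp}}\to 0$, which identifies $sM_c(\underbar{E}_p,d)\otimes\QQ_p$ (the fibre of $s(i_.^*)$, up to the shift convention) with the truncated cohomology of the twisted complexes $E^{log}_.(-Z_.)$; then the comparison of this twisted log-crystalline cohomology with rigid cohomology \emph{with compact supports} of the open part with coefficients (the compact-support analogue of \cite{Tr}, Prop.~1.5), together with proper cohomological descent for $\RR\Gamma_{rig,c}$ along $V_.\to V$ (\cite{Tsu}), plus the same ``$d$ large'' and truncation bookkeeping as in the previous Lemma. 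Your plan replaces these two inputs by an identification of the $Z$-term with $H^i_{rig}(Z,E_{p|Z})$ and a naive open/closed excision sequence, and with those ingredients the comparison of triangles would not go through.
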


Let $r$ be an integer and $d$ large enough. We set 

$$H^i_{abs}(V,\underbar{E}_p(r)):=Ext^i(\underbar{W},M_c(\underbar{E}_p,d)(r))$$
 
$$\chi(\underbar{E}_p,r):=\chi(\underbar{W}, M_c(\underbar{E}_p,d)(r)),$$
if it is well-defined.

Finally we set $\tilde{\chi}(\underbar{E}_p,r):=\tilde{\chi}(P,r)$, with $P=\RR\underbar{Hom}(\underbar{W}, M_c(\underbar{E}_p,d))$. We deduce from the previous lemma and from Th. \ref{MR-main}:

\begin{corollary} Let $\underbar{E}_p$ be as before. Assume that $q^r$ is not a multiple root of the minimum polynomial of $F^a$ acting on $H^j_{rig,c}(V,E_p)$ for any $j$.
\begin{enumerate}
\item The groups $H^i_{abs}(V,\underbar{E}_p(r))$ are finitely generated $\ZZ_p$-modules and the alternating sum of their rank is zero. 

\item The $L-$function $L(V,E,t)$  has a pole at $t=q^{-r}$ of order 
$$\rho:=\sum_j (-1)^{j+1}j.rank_{\ZZ_p}(H^i_{abs}(V,\underbar{E}_p(r))).$$

\item  $\chi(\underbar{E}_p,r)$ is well-defined and we have 
$$| lim_{t \to q^{-r}}L(V,E,t) (1-q^rt)^\rho|_p^{-1}=\chi(\underbar{E}_p,r).q^{\tilde{\chi}(\underbar{E}_p,r)}$$
\end{enumerate}
\end{corollary}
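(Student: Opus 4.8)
The plan is to reduce the Corollary directly to Theorem \ref{MR-main} applied to the complex $M_c(\underbar{E}_p,d)$ for $d$ sufficiently large, using the two lemmas just established to translate the cohomological and $\zeta$-function quantities appearing there into their rigid-cohomological counterparts over $V$. First I would fix an integer $r$ as in the hypothesis and choose $d$ large enough that both Lemma~(on $H^i(sM(\underbar{E}_p,d))$) and Lemma~(on $H^i(sM_c(\underbar{E}_p,d))$) hold; set $M=\underbar{W}$, $N=M_c(\underbar{E}_p,d)$, and $P=\RR\underbar{Hom}(\underbar{W},M_c(\underbar{E}_p,d))$. Since $\underbar W$ is the unit object for $*^L$, one has $P\simeq M_c(\underbar{E}_p,d)$ in $D^b_c(R)$, and by the previous lemma $H^j(sP)\otimes\QQ_p\simeq H^j_{rig,c}(V,E_p)$ compatibly with $F^a$ (here we use the Frobenius compatibility recorded in the corollary of the slope spectral sequence section). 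Hence the condition ``$q^r$ is not a multiple root of the minimum polynomial of $F^a$ on $H^j(sP)\otimes K$'' is exactly the hypothesis of the Corollary, and Theorem~\ref{MR-main} applies.

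Next I would match the three assertions term by term. For (1): by definition $H^i_{abs}(V,\underbar{E}_p(r))=\Ext^i(\underbar{W},M_c(\underbar{E}_p,d)(r))$, so assertion (1) of Theorem~\ref{MR-main} gives precisely that these are finitely generated $\ZZ_p$-modules with vanishing alternating sum of ranks. For (2): one first observes that $Z(P,t)=L(V,E_p,t)=L(V,E,t)$, using the cohomological expression of the $L$-function from Section~\ref{Lfct} together with $H^j(sP)\otimes K\simeq H^j_{rig,c}(V/K,E_p)$ and the identification of the $\iota$-pure eigenvalues; then assertion (2) of Theorem~\ref{MR-main} yields the order of the pole at $t=q^{-r}$ as the stated alternating sum. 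For (3): $\chi(\underbar{E}_p,r)$ is $\chi(\underbar{W},M_c(\underbar{E}_p,d)(r))$ by definition, which is well-defined by assertion (3) of the theorem, and $\tilde\chi(\underbar{E}_p,r)$ is defined as $\tilde\chi(P,r)$; the $p$-adic leading-coefficient formula is then a direct transcription of the last displayed equation in Theorem~\ref{MR-main} (in the refined form with $\tilde\chi$ proved in the final theorem of Section~5).

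One routine point to check is independence of $d$: the complexes $M_c(\underbar{E}_p,d)$ for varying large $d$ all compute the same object in $D^b_c(R)$ up to the truncation $\tau_{\leq N(E)}$ and the comparison with $\RR\Gamma_{rig,c}(V,E_p)$, so the invariants $\rho$, $\chi$, $\tilde\chi$ and the groups $H^i_{abs}$ do not depend on the choice, which justifies writing them without reference to $d$. The main obstacle is not in this reduction itself but upstream: one must be sure that the comparison isomorphisms in the two lemmas are genuinely compatible with the Frobenius $F^a$ (not merely as $K$-vector spaces), since Theorem~\ref{MR-main} reads off poles and $p$-adic valuations from the Frobenius eigenvalues and slopes of $H^j(sP)\otimes K$; this rests on the proper-descent and log-crystalline--versus--rigid comparison results cited (\cite{Tr}, Prop.~1.5, and \cite{ELS}), and on the functoriality of Ekedahl's equivalence $\bf R$ with respect to Frobenius. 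Granting these compatibilities, the Corollary follows formally.
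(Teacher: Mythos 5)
Your proposal is correct and follows essentially the same route as the paper: the paper simply deduces the corollary from the lemma identifying $H^i(sM_c(\underbar{E}_p,d))\otimes\QQ_p$ with $H^i_{rig,c}(V,E_p)$ together with Theorem \ref{MR-main} (in its refined form with $\tilde{\chi}$), applied to $M=\underbar{W}$ and $N=M_c(\underbar{E}_p,d)(r)$, exactly as you do. Your additional remarks on Frobenius compatibility, the identification $Z(P,t)=L(V,E,t)$ via the cohomological formula of \cite{ELS}, and independence of $d$ are just an explicit spelling-out of the same deduction.
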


\subsection{The $\ell$-adic valuation of the leading coefficient}

This section is not new but merely a rewriting (and a corollary) of \cite{MR2}, section 4. 

Let $\ell\neq p$ be a prime. We denote $D^b_c(k,\ZZ_\ell)$ the derived category of bounded constructible $\ZZ_\ell$-complexes on $Spec(k)$ (\cite{Ek2}) and $\ZZ_\ell\in D^b_c(k,\ZZ_l)$ will denote the constant object. We have an equivalence of categories 
$$D^b_c(k,\ZZ_\ell)\simeq D^b_c(k,\ZZ/\ell^.),$$
where the left hand side is the derived category of projective systems $M=(M_n)_n$, where $M_n$ is a complex of $\ZZ/\ell^n$ sheaf on $Spec(k)$ and $M$ is such that the projective system 
$$(\ZZ/\ell\otimes^L_{\ZZ/\ell^n} M_n)_n$$
is isomorphic to the constant projective system defined by an object of $D^b_c(k,\ZZ/\ell)$ modulo a projective system of complexes $K=(K_n)_n$ such that the transition maps of the projective system $(H^i(K_n))_n$ becomes zero for $n$ large enough.

We also denote $D(\ZZ_\ell\Gamma)$ the derived category of complexes of $\ZZ_\ell$-module with continuous action of $\Gamma$ and denote $D^b_c(\ZZ_\ell\Gamma)$ the subcategory of bounded complexes with finitely generated cohomology as $\ZZ_\ell$-module.

We have a canonical functor 
$$\alpha: D^b_c(k,\ZZ_\ell)\simeq D^b_c(k,\ZZ/\ell^.)\to D^b_c(\ZZ_\ell\Gamma)$$
$$M=(M_n)\mapsto \RR\ilim M_n(k^{sep}).$$ 

As in the case of $D^b_c(R)$ we have an internal Hom, $\RR\underbar{Hom}(.,.)$ and we can define as before the complex $Ext^.(M,N(r))$, for any $M,N\in D^b_c(k,\ZZ_\ell)$ and integer $r$.

The following theorem was proved in \cite{MR2}

\begin{theorem}\label{mainth_l} Let $M,N\in D^b_c(k,\ZZ_\ell)$ and let $P=\RR\underbar{Hom}(M,N)$. Assume that for all $j$, the minimal polynomial of $\gamma$ acting on $H^j(sP)\otimes\QQ_\ell$ does not have $q^r$ as multiple root.
\begin{enumerate}
\item The groups $Ext^j(M,N(r))$ are finitely generated $\ZZ_\ell$-modules and the alternating sum of their ranks is zero.
\item The Zeta function $Z(P,t):=\prod_j det(1-t\gamma, H^j(sP)\otimes\QQ_\ell)^{(-1)^{j+1}}$ has a pole at $t=q^{-r}$ of order 
$$\rho:=\sum_j(-1)^{j+1} j.rank_{\ZZ_\ell} Ext^j(M,N(r)).$$
\item The cohomology groups of the complex $Ext^.(M,N(r))$ and the alternating product of their orders denoted $\chi(M,N,r)$ satisfies
$$|\lim_{t\to q^{-r}}Z(P,t)(1-q^rt)^\rho|_\ell^{-1}=\chi(M,N,r).$$
\end{enumerate}

\end{theorem}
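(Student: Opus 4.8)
The plan is to run the argument of Section~5 almost verbatim, replacing the $F$-gauge and Raynaud-module machinery by the elementary Galois cohomology of $\Spec(k)$: for $\ell$-power torsion sheaves the cohomological dimension of $\Gamma\simeq\hat{\ZZ}$ is $1$, so there is no analogue of the unipotent perfect groups $U^j$ nor of the Hodge numbers, and this is precisely why the right-hand side of (3) carries no power of $q$.

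First I would set up the formalism, importing the $\ell$-adic homological algebra of \cite{MR2}. Applying $\alpha$ and $\RR\varprojlim$ one checks, exactly as in Theorem~\ref{Mi4.9}, that
$$\RR\Gamma(\Spec(k)_{\et},\RR\underbar{Hom}(M,N(r)))=\RR\Hom(M,N(r)),$$
and one has a Hochschild--Serre spectral sequence
$$H^i(\Gamma,Ext^j(\bar M,\bar N(r)))\Rightarrow Ext^{i+j}(M,N(r)),$$
which, as $\mathrm{cd}_\ell(\Gamma)=1$, degenerates into short exact sequences
$$0\to Ext^{j-1}(\bar M,\bar N(r))_\Gamma\to Ext^j(M,N(r))\to Ext^j(\bar M,\bar N(r))^\Gamma\to 0.$$
Since $M,N\in D^b_c(k,\ZZ_\ell)$, the groups $Ext^j(\bar M,\bar N(r))$ are finitely generated $\ZZ_\ell$-modules, hence so are their $\gamma$-invariants and $\gamma$-coinvariants; this proves the finiteness part of (1), and the rank part follows because $\dim_{\QQ_\ell}W^\Gamma=\dim_{\QQ_\ell}W_\Gamma$ for every finite-dimensional $\QQ_\ell[\gamma]$-module $W$, so the alternating sum of the ranks in the displayed sequences telescopes to $0$. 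As in \cite{MR}, the edge maps of the spectral sequence make $(Ext^\bullet(M,N(r)),d^\bullet)$ into a complex whose differential $d^j$ is the composite
$$Ext^j(M,N(r))\twoheadrightarrow Ext^j(\bar M,\bar N(r))^\Gamma\xrightarrow{\,f_j\,}Ext^j(\bar M,\bar N(r))_\Gamma\hookrightarrow Ext^{j+1}(M,N(r)),$$
$f_j$ being induced by the identity; chasing the short exact sequences gives $0\to\coker(f_{j-1})\to H^j(Ext^\bullet(M,N(r)))\to\Ker(f_j)\to 0$, so all the cohomology groups of this complex are finite if and only if every $z(f_j)$ is defined, and then $\chi(M,N,r)=\prod_j z(f_j)^{(-1)^j}$.

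It then remains to evaluate $z(f_j)$ and to relate everything to $Z(P,t)$. Base change to $\bar k$ is exact and commutes with $\RR\underbar{Hom}$, and over the point $\Spec(\bar k)$ the functor $\RR\Gamma$ is trivial, so $Ext^j(\bar M,\bar N(r))$ is the $j$-th cohomology group of $\bar P(r)$ with its residual $\gamma$-action; as in Section~5 the Tate twist multiplies the $\gamma$-eigenvalues by $q^{-r}$, so writing $(a_{jl})_l$ for the eigenvalues of $\gamma$ on $H^j(sP)\otimes\QQ_\ell$, the eigenvalues of $\gamma$ on $Ext^j(\bar M,\bar N(r))\otimes\QQ_\ell$ are the $a_{jl}/q^r$. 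Hence $q^r$ is a multiple root of the minimal polynomial of $\gamma$ on $H^j(sP)\otimes\QQ_\ell$ if and only if $1$ is a multiple root of that of $\gamma$ on $Ext^j(\bar M,\bar N(r))\otimes\QQ_\ell$, so under the hypothesis on $r$ Lemma~\ref{z(f)} applies to the finitely generated $\ZZ_\ell$-module $Ext^j(\bar M,\bar N(r))$ and gives
$$z(f_j)=\Big|\prod_{l,\ a_{jl}\neq q^r}\big(1-\tfrac{a_{jl}}{q^r}\big)\Big|_\ell;$$
there is no further correction, since $Ext^j(\bar M,\bar N(r))$ is already a finitely generated $\ZZ_\ell$-module and $q$ is an $\ell$-adic unit. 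As $Z(P,t)=\prod_j\big(\prod_l(1-a_{jl}t)\big)^{(-1)^{j+1}}$, setting $\rho_j:=\#\{l:a_{jl}=q^r\}=\dim_{\QQ_\ell}(Ext^j(\bar M,\bar N(r))^\Gamma\otimes\QQ_\ell)$ the function $Z(P,t)$ has at $t=q^{-r}$ a pole of order $\sum_j(-1)^j\rho_j$; combining this with $\rank_{\ZZ_\ell}Ext^j(M,N(r))=\rho_{j-1}+\rho_j$ (from the short exact sequences) one computes $\sum_j(-1)^{j+1}j\cdot\rank_{\ZZ_\ell}Ext^j(M,N(r))=\sum_j(-1)^j\rho_j$, which proves (2). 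Finally, regrouping $Z(P,t)(1-q^rt)^\rho=\prod_j\big(\prod_{l,\ a_{jl}\neq q^r}(1-a_{jl}t)\big)^{(-1)^{j+1}}$, letting $t\to q^{-r}$ and taking $|\cdot|_\ell^{-1}$ yields $\prod_j z(f_j)^{(-1)^j}=\chi(M,N,r)$, which is (3).

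The part requiring care is not conceptual but the $\ell$-adic bookkeeping: one must verify over $\Spec(k)$ the existence of the internal Hom, the comparison $\RR\Gamma(\Spec(k)_{\et},\RR\underbar{Hom}(-,-))=\RR\Hom(-,-)$, compatibility of base change to $\bar k$ with $\RR\underbar{Hom}$, and the identification of $Ext^\bullet(\bar M,\bar N(r))\otimes\QQ_\ell$ with $H^\bullet(sP)\otimes\QQ_\ell$ as $\QQ_\ell[\gamma]$-modules twisted by $q^{-r}$, so that the edge-map formalism and Lemma~\ref{z(f)} apply word for word. This is exactly what is carried out in \cite{MR2}, \S4, which is why the present statement is a corollary of loc.\ cit.
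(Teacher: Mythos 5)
Your proposal is correct in substance, but note that for this statement the paper itself contains no argument at all: Theorem~\ref{mainth_l} is quoted verbatim from \cite{MR2}, \S 4, and the paper's ``proof'' is the citation. What you have done is reconstruct that cited argument, and your reconstruction runs exactly parallel to the paper's own sketch of the $p$-adic case in Section~5: the comparison $\RR\Gamma(\Spec(k)_{\et},\RR\underbar{Hom}(M,N(r)))=\RR\Hom(M,N(r))$, the Hochschild--Serre spectral sequence degenerating (since $\mathrm{cd}_\ell(\hat{\ZZ})=1$) into the two-step extensions $0\to Ext^{j-1}(\bar M,\bar N(r))_\Gamma\to Ext^{j}(M,N(r))\to Ext^{j}(\bar M,\bar N(r))^{\Gamma}\to 0$, the identification $\chi(M,N,r)=\prod_j z(f_j)^{(-1)^j}$ via the edge-map complex, and Lemma~\ref{z(f)} (valid verbatim over $\ZZ_\ell$) for the evaluation of $z(f_j)$. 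You also correctly isolate where the $\ell$-adic case is \emph{simpler} than Proposition~\ref{keyprop}: $Ext^j(\bar M,\bar N(r))$ is already a finitely generated $\ZZ_\ell$-module, so there is no unipotent group $U^j$ and no slope correction $\prod_{\mathrm{ord}_q(a_{jl})<r} q^r/a_{jl}$, and since $q\in\ZZ_\ell^\times$ no power of $q$ survives in part (3); the rank and pole-order bookkeeping ($\rank\,Ext^j=\rho_{j-1}+\rho_j$, telescoping, $\rho=\sum_j(-1)^j\rho_j$) is exactly as in \cite{MR}. The only point to tighten is the normalization of the Tate twist: with $\gamma$ the \emph{arithmetic} Frobenius, $\gamma$ acts on $\ZZ_\ell(1)$ by $q$, so to get eigenvalues $a_{jl}/q^r$ on $Ext^j(\bar M,\bar N(r))$ (and hence the match with the factor $1-a_{jl}t$ at $t=q^{-r}$) one must either use the geometric Frobenius or the twist convention of \cite{MR2}; this is a matter of conventions, which the paper itself leaves implicit, not a gap in your argument.
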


We come back to our strict (and nice) compatible system $E$.

We set

\begin{enumerate}
\item $N(E_l):=\RR\Gamma_{et,c}(V\times_k \bar k,E_l)$.
\item $P=\RR\underbar{Hom}(\ZZ_\ell,N(E_l))$.
\item $H^i_{abs}(V,E_l(r)):=Ext^i(\ZZ_\ell, N(E_l)(r))$.
\item $\chi(E_l,r):=\chi(\ZZ_\ell, N(E_l),r)$. 
\end{enumerate}

We deduce from Theorem \ref{mainth_l}:

\begin{corollary} Assume that $q^r$ is not a multiple root of the minimum polynomial of $\varphi_l$ acting on $H^j_{et,c}(V\times_k \bar k,E_l)$ for any $j$.
\begin{enumerate}
\item The groups $H^i_{abs}(V,E_\ell(r))$ are finitely generated $\ZZ_\ell$-modules and the alternating sum of their rank is zero. 

\item The $L-$function $L(V,E,t)$  has a pole at $t=q^{-r}$ of order 
$$\rho:=\sum_j (-1)^{j+1}j.rank_{\ZZ_\ell}(H^i_{abs}(V,E_\ell(r))).$$

\item  $\chi(E_\ell,r)$ is well-defined and we have 
$$| lim_{t \to q^{-r}}L(V,E,t) (1-q^rt)^\rho|_\ell^{-1}=\chi(E_l,r).$$
\end{enumerate}
\end{corollary}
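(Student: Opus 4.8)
The plan is to deduce this corollary directly from Theorem~\ref{mainth_l}, applied to the pair $M=\ZZ_\ell$ (the constant, hence unit, object of $D^b_c(k,\ZZ_\ell)$) and $N=N(E_\ell)=\RR\Gamma_{et,c}(V\times_k\bar k,E_\ell)$. The preliminary point to check is that $N(E_\ell)$ genuinely lies in $D^b_c(k,\ZZ_\ell)$. Since $E$ is a \emph{nice} strict compatible system, its $\ell$-adic component $E_\ell$ comes from a smooth (in particular constructible) $\ZZ_\ell$-sheaf on $V$, and $V$ is a variety over the finite field $k$; the finiteness of $\ell$-adic cohomology with compact support of constructible $\ZZ_\ell$-complexes over $k$ (\cite{Ek2}) then shows that $\RR\Gamma_{et,c}(V\times_k\bar k,E_\ell)$ is bounded with finitely generated $\ZZ_\ell$-cohomology carrying a continuous $\Gamma$-action, i.e. is an object of $D^b_c(k,\ZZ_\ell)$. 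The same is then true of $P:=\RR\underbar{Hom}(\ZZ_\ell,N(E_\ell))$.

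The second point is to identify the zeta function $Z(P,t)$ of $P$ with $L(V,E,t)$. Because $\ZZ_\ell$ is the unit for the internal tensor product, $P\simeq N(E_\ell)$, so that $H^j(sP)\simeq H^j_{et,c}(V\times_k\bar k,E_\ell)$ as $\ZZ_\ell$-modules with continuous $\Gamma$-action, the topological generator $\gamma$ acting as the geometric Frobenius $\varphi_\ell$. Consequently the standing hypothesis that $q^r$ is not a multiple root of the minimal polynomial of $\varphi_\ell$ on $H^j_{et,c}(V\times_k\bar k,E_\ell)$ for every $j$ is exactly the hypothesis of Theorem~\ref{mainth_l} for this $P$; and by Grothendieck's cohomological formula for the $L$-function,
$$Z(P,t)=\prod_j\det\bigl(1-t\varphi_\ell,\,H^j_{et,c}(V\times_k\bar k,E_\ell)\otimes\QQ_\ell\bigr)^{(-1)^{j+1}}=L(V,E_\ell,t).$$
By the strictness of the compatible system, $L(V,E_\ell,t)=L(V,E_p,t)=L(V,E,t)$, the function defined above.

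Granting these two identifications, the three parts of Theorem~\ref{mainth_l} translate verbatim into the three assertions of the corollary. Its part (1) gives that $H^j_{abs}(V,E_\ell(r))=Ext^j(\ZZ_\ell,N(E_\ell)(r))$ is a finitely generated $\ZZ_\ell$-module with $\sum_j(-1)^j rank_{\ZZ_\ell}Ext^j(\ZZ_\ell,N(E_\ell)(r))=0$; its part (2), together with $Z(P,t)=L(V,E,t)$, gives that $L(V,E,t)$ has a pole at $t=q^{-r}$ of order $\rho=\sum_j(-1)^{j+1}j\cdot rank_{\ZZ_\ell}H^j_{abs}(V,E_\ell(r))$; and its part (3) shows that the cohomology of the complex $Ext^{\bullet}(\ZZ_\ell,N(E_\ell)(r))$ is finite, so $\chi(E_\ell,r)=\chi(\ZZ_\ell,N(E_\ell),r)$ is well defined, and that $|\lim_{t\to q^{-r}}L(V,E,t)(1-q^rt)^{\rho}|_\ell^{-1}=\chi(E_\ell,r)$.

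Thus the whole argument is essentially formal once Theorem~\ref{mainth_l} is in hand. The only steps requiring actual (if routine) verification are that $N(E_\ell)\in D^b_c(k,\ZZ_\ell)$ and that $Z(P,t)=L(V,E,t)$; I expect the latter to be the only delicate point, since it rests simultaneously on Grothendieck's trace formula for constructible $\ZZ_\ell$-sheaves and on the strictness of the compatible system --- which is precisely what guarantees that the $\ell$-adic $L$-function coincides with the $p$-adic one treated in the previous subsection and hence equals the function $L(V,E,t)$. No ingredient beyond \cite{MR2} and standard $\ell$-adic finiteness is needed.
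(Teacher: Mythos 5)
Your proposal is correct and follows exactly the route the paper takes: the corollary is deduced formally from Theorem~\ref{mainth_l} applied to $M=\ZZ_\ell$ and $N=N(E_\ell)=\RR\Gamma_{et,c}(V\times_k\bar k,E_\ell)$, using Grothendieck's cohomological formula and the strict compatibility to identify $Z(P,t)$ with $L(V,E,t)$. Your preliminary verifications (that $N(E_\ell)\in D^b_c(k,\ZZ_\ell)$ via the niceness of the compatible system, and the identification of $\gamma$ with $\varphi_\ell$) are exactly the routine points the paper leaves implicit.
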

\end{section}

\end{document}